\theoremstyle{plain}
\newtheorem{theorem}{Theorem}[section]
\newtheorem{proposition}[theorem]{Proposition}
\newtheorem{lemma}[theorem]{Lemma}
\theoremstyle{definition}
\theoremstyle{remark}
\renewcommand{\thefootnote}{\arabic{footnote}}
\def\R{\mathbb R}% tap so thuc
\def\al{\alpha}% alpha
\def\om{\omega}% omega
\def\Om{\Omega}% Omega
\def\be{\beta}% beta
\def\de{\delta}% delta
\def\De{\Delta} % Delta
\def\lam{\lambda}% lambda
\def\Lam{\Lambda}% Lambda
\def\vphi{\varphi}% varphi
\def\ep{\epsilon}% epsilon
\def\na{\nabla}% nabla
\def\pa{\partial}% dao ham rieng
\def\lt{\left}% trai
\def\rt{\right}% phai
\def\o{\overline}
\def\i0i{\int_0^\infty}
\numberwithin{equation}{section}
\title{Improved Moser--Trudinger inequality for functions with mean value zero in $\R^n$ and its extremal functions}
\author{Van Hoang Nguyen\footnote{
Institut de Math\'ematiques de Toulouse, Universit\'e Paul Sabatier, 118 Route de Narbonne, 31062 Toulouse c\'edex 09, France.}
}
\begin{document}
\maketitle

%% Classification and key words; note that the 2010 classification is used:

\renewcommand{\thefootnote}{}

\footnote{Email: \href{mailto: Van Hoang Nguyen <van-hoang.nguyen@math.univ-toulouse.fr>}{van-hoang.nguyen@math.univ-toulouse.fr}}

\footnote{2010 \emph{Mathematics Subject Classification\text}: 46E35, 26D10.}

\footnote{\emph{Key words and phrases\text}: Moser--Trudinger inequality, blow-up analysis, sharp constant, extremal functions, elliptic estimates.}

\renewcommand{\thefootnote}{\arabic{footnote}}
\setcounter{footnote}{0}

\begin{abstract}
Let $\Omega$ be a bounded smooth domain in $\mathbb R^n$, $W^{1,n}(\Omega)$ be the Sobolev space on $\Omega$, and $\lambda(\Omega) = \inf\{\|\nabla u\|_n^n: \int_\Omega u dx =0, \|u\|_n =1\}$ be the first nonzero Neumann eigenvalue of the $n-$Laplace operator $-\Delta_n$ on $\Om$. For $0 \leq \alpha < \lambda(\Omega)$, let us define $\|u\|_{1,\alpha}^n =\|\nabla u\|_n^n -\alpha \|u\|_n^n$. We prove, in this paper, the following improved Moser--Trudinger inequality on functions with mean value zero on $\Omega$,
\[
\sup_{u\in W^{1,n}(\Omega), \int_\Omega u dx =0, \|u\|_{1,\alpha} =1} \int_{\Omega} e^{\beta_n |u|^{\frac n{n-1}}} dx < \infty,
\]
where $\beta_n = n (\omega_{n-1}/2)^{1/(n-1)}$, and $\omega_{n-1}$ denotes the surface area of unit sphere in $\R^n$. We also show that this supremum is attained by some function $u^*\in W^{1,n}(\Omega)$ such that $\int_\Omega u^* dx =0$ and $\|u^*\|_{1,\alpha} =1$. This generalizes a result of Ngo and Nguyen \cite{NN17} in dimension two and a result of Yang  \cite{Yang07} for $\alpha=0$, and improves a result of Cianchi \cite{Cianchi05}.
\end{abstract}

\section{Introduction}
Let $\Omega$ be a bounded smooth domain in $\R^n$ and $W_0^{1,n}(\Om)$ be completion of $C_0^\infty(\Om)$ under the Dirichlet norm $\|u\|_{W_0^{1,n}(\Om)} =\lt(\int_\Omega |\na u|^n dx\rt)^{1/n}$. The Moser--Trudinger inequality  asserts that
\begin{equation}\label{eq:MT}
\sup_{u\in W_0^{1,n}(\Om), \|\na u\|_n \leq 1} \int_\Om e^{\al u^{\frac n{n-1}}} dx < \infty,
\end{equation}
for any $\alpha \leq \alpha_n := n\omega_{n-1}^{\frac 1{n-1}}$ where $\omega_{n-1}$ denotes the area of unit sphere in $\R^n$. This inequality \eqref{eq:MT} was proved independently by Poho${\rm \check{z}}$aev \cite{P1965}, Yudovi${\rm \check{c}}$ \cite{Y1961} and Trudinger \cite{T1967}. The sharp constant $\alpha_n$ was found by Moser \cite{M1970}.

Let $W^{1,n}(\Om)$ be the completion of $C^\infty(\o \Om)$ under the norm 
\[
\|u\|_{W^{1,n}(\Omega)} = \lt(\|u\|_n^n + \|\na u\|_n^n\rt)^{1/n}.
\]
In \cite{Cianchi05}, Cianchi proved a sharp Moser--Trudinger inequality for functions in $W^{1,n}(\Om)$ with mean value zero as follows
\begin{equation}\label{eq:Cianchi}
\sup_{u\in W^{1,n}(\Om), \int_\Om u dx =0, \|\na u\|_n \leq 1} \int_\Om e^{\beta |u|^{\frac n{n-1}}} dx < \infty,
\end{equation}
for any $\beta \leq \beta_n = n (\om_{n-1}/2)^{1/(n-1)}$. Moreover, if $\beta > \beta_n$ then the supremum in \eqref{eq:Cianchi} will be infinite. In special case when $\Om$ is ball $B^n$ in $\R^n$, the inequality \eqref{eq:Cianchi} was proved by Leckband in \cite{Leckband05}. This inequality generalizes an earlier result of Chang and Yang \cite{CYzero} in dimension two,
\begin{equation}\label{eq:ChangYang}
\sup_{u\in W^{1,2}(\Om), \int_\Om u dx =0, \|\na u\|_2 \leq 1} \int_\Om e^{\beta |u|^{2}} dx < \infty
\end{equation}
for any $\beta \leq 2\pi$.  A sharpened version of \eqref{eq:ChangYang} in spirit of Adimurthi and Druet \cite{AD2004} was proved by Lu and Yang in \cite{LYzero}. %To state their result, let us denote by
%, and let $q(t) =1 + a_1t +\cdots + a_k t^k$ denote the polynomial of order $k$ with coefficients satisfies
%\begin{equation}\label{eq:coeff}
%0\leq a_1 < \lambda(\Om),\, 0\leq a_2 \leq \lambda(\Om) a_1,\, \ldots,\,  a_k \leq \lam(\Om) a_{k-1}.
%\end{equation} 
%Lu and Yang \cite{LYzero} proved the following inequality
%\begin{equation}\label{eq:LuYang}
%\sup_{u\in W^{1,2}(\Om), \|\na u\|_2 \leq 1, \int_\Om u dx =0} \int_\Om e^{2\pi u^2 q(\|u\|_2^2)} dx < \infty.
%\end{equation}
%Moreover, if $a_1 \geq \lam(\Om)$ then the supremum in \eqref{eq:LuYang} will be infinite for any choice of $a_2,\ldots,a_k$. \eqref{eq:LuYang} is an improvement of \eqref{eq:ChangYang} in spirit of Adimurthi and Druet \cite{AD2004} for Moser--Trudinger inequality \eqref{eq:MT}. Such an improvement was recent proved for Moser--Trudinger inequality in whole space $\R^n$ by do \'O and de Souza \cite{doO2014,doOSouza} and for sharp Adams inequality in dimension four by Lu and Yang \cite{LYadams}.  It was also proved in \cite{LYzero} that there exists $0< \ep_0 \leq \lambda(\Om)$ such that the supremum in \eqref{eq:LuYang} is attained for any $0 \leq a_1 < \ep_0$. In particular, there exists extremal functions for \eqref{eq:ChangYang}. . 

In \cite{NN17}, Ngo and the author proved another sharpened version of Moser--Trudinger type inequality for functions with mean value zero in dimension two. To state the result in that paper, let us denote by 
\[
\lambda(\Om) = \inf\{\|\na u\|_2^2\,:\, u\in W^{1,2}(\Om), \|u\|_2 =1, \int_\Om u dx =0\}
\]
the first nonzero Neumann eigenvalue of $-\Delta$ on $\Om$, and for $0\leq \alpha < \lam(\Om)$, we denote
\[
\|u\|_{1,\al}^2 = \|\na u\|_2^2 -\alpha \|u\|_2^2.
\]
%Note that by Poincar\'e inequality, $\|\cdot\|_{1,\al}$ is a norm on the subspace of $W^{1,2}(\Om)$ being orthogonal to constant functions
In \cite{NN17}, Ngo an the author proved the following inequality,
\begin{equation}\label{eq:NN17}
\sup_{u\in W^{1,2}(\Om), \|u\|_{1,\alpha} \leq 1, \int_\Om u dx =0} \int_\Om e^{2\pi u^2} dx < \infty.
\end{equation}
This is an improvement of \eqref{eq:ChangYang} in spirit of Tintarev \cite{Tintarev} for the classical Moser--Trudinger inequality. Such a result recently was proved for the singular Moser--Trudinger inequality in dimension two by Yang and Zhu \cite{YZ}. As shown in \cite{NN17}, \eqref{eq:NN17} is stronger than the one of Lu and Yang \cite{LYzero} and the one of Chang and Yang \eqref{eq:ChangYang}. It is also proved in \cite{NN17} that the supremum in \eqref{eq:NN17} is attained by some functions $u\in W^{1,2}(\Om)$ with $\int_\Om u dx =0$ and $\|u\|_{1,\al} \leq 1$. %The proof in \cite{NN17} is based on blow-up analysis method.

Our goal of this paper is to establish an improvement of type \eqref{eq:NN17} for inequality \eqref{eq:Cianchi}. Let $\Om$ be a smooth bounded domain in $\R^n$, we denote
\[
\mathcal H =\lt\{u\in W^{1,n}(\Om)\,:\, \int_\Om  u dx =0\rt\}
\]
the subspace of $W^{1,n}(\Om)$ consisting the functions of mean value zero. Denote
\[
\lam_1(\Om) = \inf \{\|\na u\|_n^n\, :\, u\in \mathcal H, \|u\|_n =1\}
\]
the first nonzero Neumann eigenvalue of $n-$Laplace $-\De_n$ on $\Om$. By a simple variational argument, we can prove that $\lam_1(\Om)$ is strict positive and is attained by a function in $\mathcal H$. For $0\leq \al < \lam_1(\Om)$, we define
\[
\|u\|_{1,\al}^n = \|\na u\|_n^n -\al \|u\|_n^n,\qquad u\in \mathcal H.
\]
Note that $\|\cdot\|_{n,\al}$ is a norm on $\mathcal H$. Our first main result reads as follows
\begin{theorem}\label{Main1}
Let $\Om$ be a bounded smooth domain in $\R^n$. For any $0\leq \al < \lam_1(\Om)$, it holds
\begin{equation}\label{eq:Mainresult1}
\sup_{u\in \mathcal H, \|u\|_{1,\al} \leq 1} \int_\Om e^{\beta_n |u|^{\frac n{n-1}}} dx < \infty.
\end{equation}
\end{theorem}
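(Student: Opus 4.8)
The plan is to argue by contradiction and absorption, reducing the improved inequality with the $\alpha$-penalized norm to the known Cianchi inequality \eqref{eq:Cianchi}. Suppose the supremum in \eqref{eq:Mainresult1} were infinite. Then there would be a sequence $(u_k)\subset \mathcal H$ with $\|u_k\|_{1,\alpha}\le 1$ and $\int_\Om e^{\beta_n|u_k|^{n/(n-1)}}dx\to\infty$. Since $\|\na u_k\|_n^n=\|u_k\|_{1,\alpha}^n+\alpha\|u_k\|_n^n\le 1+\alpha\|u_k\|_n^n$, the gradients are controlled once we control $\|u_k\|_n$. First I would show that $\|u_k\|_n$ must blow up: if $\|u_k\|_n$ stayed bounded, then $(u_k)$ would be bounded in $W^{1,n}(\Om)$, hence (up to a subsequence) weakly convergent and, by the Moser--Trudinger–type compactness below the critical threshold, the integrals $\int_\Om e^{\beta_n|u_k|^{n/(n-1)}}dx$ would stay bounded — a contradiction. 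So $t_k:=\|u_k\|_n\to\infty$.

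The key step is then a normalization argument exploiting the definition of $\lam_1(\Om)$. Set $v_k=u_k/t_k\in\mathcal H$, so $\|v_k\|_n=1$ and $\|\na v_k\|_n^n=\|u_k\|_{1,\alpha}^n/t_k^n+\alpha\le \alpha/t_k^n+\alpha\to\alpha<\lam_1(\Om)$. Thus $(v_k)$ is bounded in $W^{1,n}(\Om)$; passing to a subsequence, $v_k\rightharpoonup v$ weakly in $W^{1,n}$ and strongly in $L^n(\Om)$, so $\|v\|_n=1$, $\int_\Om v\,dx=0$, and by weak lower semicontinuity $\|\na v\|_n^n\le\alpha<\lam_1(\Om)$. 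But this contradicts the variational characterization $\|\na v\|_n^n\ge\lam_1(\Om)\|v\|_n^n=\lam_1(\Om)$ for any nonzero $v\in\mathcal H$. Hence no such blow-up sequence exists and the supremum is finite.

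The one genuine gap to fill is the compactness claim used in the first paragraph: that a bounded sequence in $W^{1,n}(\Om)$ with mean value zero has uniformly bounded exponential integrals $\int_\Om e^{\beta_n|u_k|^{n/(n-1)}}dx$. I would establish this by a standard argument: split $u_k=(u_k-v)+v$ where $v$ is the weak/strong limit; on the part $u_k-v$, whose $L^n$-norm goes to zero, use that $\|\na(u_k-v)\|_n$ is bounded (say by $M$), rescale to reduce to Cianchi's inequality \eqref{eq:Cianchi} applied to $(u_k-v-\fint_\Om(u_k-v))/M'$ for a slightly enlarged $M'$, and use the strong $L^n$ convergence together with a Moser--Trudinger gain (a Lions-type concentration-compactness splitting: below the critical mass the exponential is integrable with a uniform bound) to absorb the cross terms via Young's inequality $|a+b|^{n/(n-1)}\le(1+\vep)|a|^{n/(n-1)}+C_\vep|b|^{n/(n-1)}$. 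The main obstacle is handling these cross terms cleanly so that the exponent of the dominant piece stays strictly below $\beta_n$ — this is exactly where the mean-value-zero constraint (hence Cianchi's threshold $\beta_n$ rather than $\alpha_n$) and the strong convergence in $L^n$ are used. Once this compactness lemma is in hand, the contradiction argument above closes the proof of \eqref{eq:Mainresult1}.
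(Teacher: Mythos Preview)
Your argument has a genuine gap that is not a technicality but the entire content of the theorem. First, note that the second paragraph is unnecessary: the bound $\|u_k\|_n^n\le 1/(\lam_1(\Om)-\al)$ follows in one line from $1\ge\|u_k\|_{1,\al}^n=\|\na u_k\|_n^n-\al\|u_k\|_n^n\ge(\lam_1(\Om)-\al)\|u_k\|_n^n$, so every admissible sequence is automatically bounded in $W^{1,n}(\Om)$. Thus the whole theorem \emph{is} your ``compactness claim'': that for $(u_k)\subset\mathcal H$ with $\|u_k\|_{1,\al}\le 1$ the integrals $\int_\Om e^{\beta_n|u_k|^{n/(n-1)}}dx$ stay bounded.

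Your proposed proof of this claim breaks down precisely in the decisive case. After passing to a subsequence with weak limit $v$, your splitting $u_k=(u_k-v)+v$ is vacuous when $v\equiv 0$, and this is exactly the concentration scenario one must rule out. In that case $\|u_k\|_n\to 0$, hence $\|\na u_k\|_n^n=1+\al\|u_k\|_n^n>1$ (assuming $\|u_k\|_{1,\al}=1$), so Cianchi's inequality \eqref{eq:Cianchi} does not apply at the critical exponent $\beta_n$: rescaling to $w_k=u_k/\|\na u_k\|_n$ forces you to control $\int_\Om e^{\beta_n\|\na u_k\|_n^{n/(n-1)}|w_k|^{n/(n-1)}}dx$ at a \emph{supercritical} exponent, and there is no uniform bound there. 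The Lions-type lemma (Lemma~\ref{Lions} in the paper) reflects this: when $u_0\equiv 0$ it only yields boundedness for $p<1$, not $p=1$. No amount of Young-inequality cross-term manipulation recovers the missing margin, because there is no nonzero limit $v$ to absorb it. This is why the paper does not argue this way at all: it passes through subcritical maximizers $u_\ep$, performs a full blow-up analysis near the boundary (Section~3), and then uses a capacity estimate (Section~4) to obtain the quantitative bound \eqref{eq:suptren}, which in particular gives finiteness. Your sketch would need an independent argument of comparable depth for the $v=0$ case; as written it is circular.
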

Concerning to the existence of maximizers for \eqref{eq:Mainresult1}, we will prove the following result.
\begin{theorem}\label{Main2}
Let $\Om$ be a bounded smooth domain in $\R^n$, and $0\leq \al < \lam_1(\Om)$. There exists $u^* \in \mathcal H$ such that $\|u^*\|_{1,\al} =1$ and
\[
\int_\Om e^{\beta_n {u^*}^{\frac n{n-1}}} dx = \sup_{u\in \mathcal H, \|u\|_{1,\alpha} \leq 1} \int_\Om e^{\beta_n |u|^{\frac n{n-1}}} dx,
\]
i.e., the supremum in \eqref{eq:Mainresult1} is attained.
\end{theorem}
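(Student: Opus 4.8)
The plan is to establish existence of maximizers for the subcritical functionals and then pass to the limit via blow-up analysis. First I would fix, for $\epsilon>0$ small, the subcritical exponent $p = n/(n-1)-\epsilon$ (or equivalently replace $\beta_n$ by $\beta_n(1-\epsilon)$) and show that the functional $u\mapsto \int_\Om e^{\beta_n|u|^{n/(n-1)-\epsilon}}dx$ attains its supremum $S_\epsilon$ on $\{u\in\mathcal H:\|u\|_{1,\al}\le 1\}$. This is a routine direct-method argument: a maximizing sequence is bounded in $W^{1,n}(\Om)$, hence converges weakly to some $u_\epsilon\in\mathcal H$; the subcritical growth together with Theorem \ref{Main1} (which supplies uniform integrability of the exponential terms) makes the functional weakly continuous, so $u_\epsilon$ is a maximizer with $\|u_\epsilon\|_{1,\al}=1$. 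By the Euler--Lagrange equation $u_\epsilon$ solves, in the weak sense,
\begin{equation}\label{eq:EL}
-\Delta_n u_\epsilon -\al |u_\epsilon|^{n-2}u_\epsilon = \frac{1}{\mu_\epsilon}|u_\epsilon|^{\frac{1}{n-1}-\epsilon} u_\epsilon^{-1}\cdot u_\epsilon\, e^{\beta_n|u_\epsilon|^{n/(n-1)-\epsilon}} -\frac{\gamma_\epsilon}{|\Om|},
\end{equation}
where the Lagrange multiplier $\gamma_\epsilon$ comes from the mean-zero constraint and $\mu_\epsilon$ is a normalizing constant; elliptic regularity then gives $u_\epsilon\in C^1(\o\Om)$.

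Next I would carry out the blow-up analysis as $\epsilon\to 0$. Let $c_\epsilon=\max_{\o\Om}|u_\epsilon|=|u_\epsilon(x_\epsilon)|$. The dichotomy is: either $c_\epsilon$ stays bounded, in which case elliptic estimates give $u_\epsilon\to u^*$ in $C^1(\o\Om)$ and $u^*$ is the desired extremal; or $c_\epsilon\to\infty$, which one must rule out by deriving a sharp upper bound for the supremum. In the blow-up case one rescales around $x_\epsilon$ by $r_\epsilon$ chosen so that $r_\epsilon^n c_\epsilon^{n/(n-1)}\sim \mu_\epsilon e^{-\beta_n c_\epsilon^{n/(n-1)-\epsilon}}$, shows the rescaled functions converge to a solution of the Liouville-type equation $-\Delta_n w = e^{\beta_n\cdot\frac{n}{n-1}w}$ on $\R^n$ (or a half-space if $x_\epsilon\to\partial\Om$, which is where the factor $\om_{n-1}/2$ enters), identifies the bubble explicitly, and computes that the neck/bubble contributes at most $|\Om|+ (\text{bubble mass})$. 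The standard outcome is the sharp bound
\[
\sup_{u\in\mathcal H,\ \|u\|_{1,\al}\le 1}\int_\Om e^{\beta_n|u|^{n/(n-1)}}dx \ \le\ |\Om| + \frac{|\Om|}{?}\, e^{\,1+\tfrac12+\cdots+\tfrac1{n-1}+\,?}\, ,
\]
where the exponent involves the Robin-type constant $A_{p}$ attached to the Green's function of $-\Delta_n-\al$ with Neumann boundary condition (the $\al$-dependence enters precisely here).

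The final and decisive step is to contradict this upper bound by exhibiting an explicit test function that beats it, forcing $c_\epsilon$ to remain bounded. I would take a Moser-type concentrating family supported near a boundary point (to exploit the $/2$ in $\beta_n$), of the form $\phi_\delta(x)= c + $ (logarithmic profile on scale $\delta$) $+$ lower-order corrections, then adjust it by subtracting its mean and renormalizing in $\|\cdot\|_{1,\al}$; the correction terms must be tuned so that the energy expansion matches the Green's function constant $A_p$ up to $o(1)$, and the extra negative contribution $-\al\|\cdot\|_n^n$ in the norm is what produces a strictly larger value than the blow-up bound. I expect the \textbf{main obstacle} to be twofold: (i) handling the degeneracy of the $n$-Laplacian in the blow-up — unlike the Laplacian case one has only $C^{1,\gamma}$ regularity and must work with the weak formulation and Tolksdorf/DiBenedetto estimates throughout, and capacity/Pohozaev identities replace the cleaner Hilbertian arguments of \cite{NN17}; and (ii) getting the boundary Green's function expansion with the correct constant and verifying that the $\al$-term in $\|u\|_{1,\al}^n$ indeed tips the test-function computation in the right direction, which is the only place the hypothesis $\al<\lam_1(\Om)$ is genuinely used (it guarantees $\|\cdot\|_{1,\al}$ is an equivalent norm and that the relevant Green's function exists and is positive). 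Once the blow-up is excluded, $u^*\in\mathcal H$ with $\|u^*\|_{1,\al}=1$ attains the supremum, and since the supremum over the closed ball is not reduced by the normalization, $u^*$ is the claimed extremal.
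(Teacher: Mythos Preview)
Your overall strategy matches the paper's: subcritical maximizers with Euler--Lagrange equation, blow-up dichotomy, boundary blow-up and bubble identification, Green-function and capacity estimates yielding the sharp upper bound $|\Omega|+\frac{\omega_{n-1}}{2n}e^{\beta_nA_p+1+\frac12+\cdots+\frac1{n-1}}$, and finally a Carleson--Chang type test function glued to the Green function to exceed that bound. Two points in your sketch need correction, however.

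First, appealing to Theorem~\ref{Main1} for uniform integrability in the subcritical step is circular: the paper derives Theorem~\ref{Main1} \emph{from} these very subcritical extremals (via the bounded/unbounded dichotomy for $c_\epsilon$). The correct tool at the subcritical stage is Cianchi's inequality \eqref{eq:Cianchi} together with a Lions-type concentration--compactness lemma adapted to $\|\cdot\|_{1,\alpha}$ (Lemma~\ref{Lions} in the paper), which gives $L^q$-boundedness of $e^{\beta_\epsilon|u_j|^{n/(n-1)}}$ for some $q>1$ without assuming the critical result.

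Second, your explanation of why the test function beats the blow-up bound is off. In the normalization $\|\phi_\epsilon\|_{1,\alpha}^n=\|\nabla w_\epsilon\|_n^n-\alpha\|\phi_\epsilon\|_n^n$, the term $\alpha\|G\|_n^n$ that appears in $\|\nabla w_\epsilon\|_n^n$ (via integration by parts against the Green equation \eqref{eq:Green}) exactly cancels $\alpha\|\phi_\epsilon\|_n^n$, so the $-\alpha\|\cdot\|_n^n$ correction does \emph{not} by itself tip the balance. The strict excess actually comes from the lower-order term $\frac{\beta_n^{n-1}}{(n-1)!}\,\|G\|_n^n\,c^{-n/(n-1)}$, of order $(-\ln\epsilon)^{-1}$, obtained from the expansion $e^t\ge 1+t^{n-1}/(n-1)!$ on $\Omega\setminus B_{R\epsilon}(p)$; this dominates the $O\bigl((-\ln\epsilon)^{-n/(n-1)}\bigr)$ error and works uniformly for all $\alpha\in[0,\lambda_1(\Omega))$. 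The hypothesis $\alpha<\lambda_1(\Omega)$ is used not only to make $\|\cdot\|_{1,\alpha}$ an equivalent norm but also, crucially, in proving that $c_\epsilon^{1/(n-1)}u_\epsilon$ is bounded in $W^{1,q}$ for $q<n$: one first shows boundedness in $L^{n-1}$ by a contradiction argument that terminates precisely because $\alpha<\lambda_1(\Omega)$ forces the limit to vanish.
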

In the case $\alpha =0$, our result \eqref{eq:Mainresult1} reduces to the one of Cianchi \eqref{eq:Cianchi}. In this case, the existence of extremal function for \eqref{eq:Cianchi} was proved by Yang in \cite{Yang07}. As usually, the proof of Theorems \ref{Main1} and \ref{Main2} is based on blow-up analysis. We refer interesting reader to the book \cite{Druet} or articles \cite{AD2004,Li2001,Lin1996,NN17,Yang06,Yang06*,Yang07,Yang09,YZ} for more detail on this technique. We should point out here that, in our situation, the blow-up occurs on the boundary $\pa \Om$ as in \cite{LYzero} which makes more difficult to deal with. The existence of extremal functions for Moser--Trudinger inequality was first proved by Carleson and Chang \cite{CC1986} for unit ball in $\R^n$. This existence result was proved for any smooth domain in $\R^2$ by Flucher \cite{Flucher1992} and then extended to any dimension by Lin \cite{Lin1996}. The existence of extremal functions for Moser--Trudinger inequality on compact Riemannian manifold was studied by Li \cite{Li2005}. For more about the existence of extremal functions for Moser--Trudinger inequality \eqref{eq:MT} and its generalization, we refer reader to \cite{CC1986,Csato2015,Csato2016,Flucher1992,Li2001,Li2005,Lin1996,NN17,Yang06,Yang06*,Yang07,Yang09,YZ} and references therein

The organization of this paper is as follows. In the next section \S2 we prove a subcritical version of \eqref{eq:Mainresult1} and the existence of extremal functions for this subcritical inequality. In section \S3, we analysis asymptotic behavior of the sequence of extremal functions for the subcritical inequality. In section \S4, we establish some capacity estimates which lead to the proof of Theorems \ref{Main1} and \ref{Main2} in section \S5. 

\section{Extremal functions for the subcritical inequalities}
In this section, we study the subcritical Moser--Trudinger inequalities for functions in $\mathcal H$. We will prove the existence of extremal function for these inequalities. For $0< \ep < \beta_n$, we denote $\beta_\ep =\beta_n -\ep$. Let us consider
\[
C_\ep = \sup_{u\in \mathcal H, \|u\|_{1,\al}\leq 1} \int_\Om e^{\beta_\ep |u|^{\frac{n}{n-1}}} dx.
\]
Our result in this section is as follows,
\begin{proposition}\label{subcritical}
Let $\Om$ be a bounded smooth domain in $\R^n$ and $0\leq \al < \lam_1(\Om)$. For any $0< \ep < \beta_n$, we have $C_\ep < \infty$ and that there exists $u_\ep \in \mathcal H \cap C^1(\overline \Om)$ such that $\|u_\ep\|_{1,\al}=1$ and 
\begin{equation}\label{eq:sub}
C_\ep = \int_\Om e^{\beta_\ep |u_\ep|^{\frac n{n-1}}} dx.
\end{equation}
The Euler--Lagrange equation of $u_\ep$ is given by
\begin{equation}\label{eq:EL}
\begin{cases}
-\De_n u_\ep = \frac1{\lam_\ep} e^{\beta_\ep |u_\ep|^{\frac n{n-1}}} |u_\ep|^{\frac{2-n}{n-1}}u_\ep + \al |u_\ep|^{n-2} u_\ep -\frac{\mu_\ep+\al \lam_\ep \nu_\ep}{\lam_\ep}&\mbox{in $\Om$,}\\
\frac{\pa u_\ep}{\pa \nu} = 0&\mbox{on $\pa \Om$,}\\
u_\ep \in \mathcal H,\|u_\ep\|_{1,\al} =1,\mu_\ep = \frac{1}{|\Om|}\int_\Om e^{\beta_\ep |u_\ep|^{\frac n{n-1}}}|u_\ep|^{\frac{2-n}{n-1}} u_\ep dx,\\
\lam_\ep = \int_\Om e^{\beta_\ep |u_\ep|^{\frac n{n-1}}} |u_\ep|^{\frac n{n-1}} dx, \nu_\ep = \frac1{|\Om|} \int_\Om |u_\ep|^{n-2} u_\ep dx,
\end{cases}
\end{equation}
where $\De_n u_\ep = {\rm div}(|\na u_\ep|^{n-2} \na u_\ep)$. Furthermore, it holds
\begin{equation}\label{eq:limitCep}
\lim_{\ep \to 0} C_\ep = \sup_{u\in \mathcal H, \|u\|_{1,\al}\leq 1} \int_\Om e^{\beta_n |u|^{\frac{n}{n-1}}} dx,
\end{equation}
\begin{equation}\label{eq:liminf}
\liminf_{\ep \to 0} \lam_\ep >0,
\end{equation}
and
\begin{equation}\label{eq:upbound}
\frac{|\mu_\ep|}{\lam_\ep} \leq c,\qquad |\nu_\ep|\leq c,
\end{equation}
for some constant $c>0$.
\end{proposition}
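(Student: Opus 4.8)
The plan is to treat the subcritical inequality by the direct method. First I would show $C_\ep<\infty$: since $\beta_\ep<\beta_n$, the exponential $e^{\beta_\ep|u|^{n/(n-1)}}$ is integrable uniformly over the constraint set by combining Cianchi's inequality \eqref{eq:Cianchi} with the norm equivalence $\|u\|_{1,\al}^n = \|\na u\|_n^n - \al\|u\|_n^n \ge (1-\al/\lam_1(\Om))\|\na u\|_n^n$ on $\mathcal H$, so that $\|u\|_{1,\al}\le 1$ forces $\|\na u\|_n$ bounded; then a standard Young-type interpolation absorbs the gap $\ep$. To get the extremal function, take a maximizing sequence $u_k\in\mathcal H$ with $\|u_k\|_{1,\al}\le 1$. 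The bound on $\|\na u_k\|_n$ together with $\int_\Om u_k\,dx=0$ and the Poincar\'e--Wirtinger inequality gives $\|u_k\|_{W^{1,n}(\Om)}$ bounded, so after passing to a subsequence $u_k\rightharpoonup u_\ep$ weakly in $W^{1,n}(\Om)$, strongly in $L^q(\Om)$ for all $q$, and a.e.; the mean-zero condition passes to the limit so $u_\ep\in\mathcal H$. The key compactness input is that the functional $u\mapsto\int_\Om e^{\beta_\ep|u|^{n/(n-1)}}dx$ is continuous along this sequence: since $\beta_\ep$ is strictly subcritical, $\{e^{\beta_\ep|u_k|^{n/(n-1)}}\}$ is bounded in $L^r$ for some $r>1$ (again by \eqref{eq:Cianchi} applied with a slightly larger exponent still below $\beta_n$), hence uniformly integrable, so a.e.\ convergence plus Vitali gives convergence of the integrals. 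Thus $\int_\Om e^{\beta_\ep|u_\ep|^{n/(n-1)}}dx = C_\ep$; weak lower semicontinuity of $\|\cdot\|_{1,\al}$ gives $\|u_\ep\|_{1,\al}\le 1$, and since the functional is strictly increasing under scaling $u\mapsto tu$ with $t>1$ we must have $\|u_\ep\|_{1,\al}=1$ (otherwise scale up and contradict maximality).

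Next I would derive the Euler--Lagrange equation \eqref{eq:EL}. The constrained critical point $u_\ep$ of $J(u)=\int_\Om e^{\beta_\ep|u|^{n/(n-1)}}dx$ subject to $G(u):=\|u\|_{1,\al}^n-1=0$ and $H(u):=\int_\Om u\,dx=0$ satisfies, by Lagrange multipliers, $J'(u_\ep)=\theta\,G'(u_\ep)+\kappa\,H'(u_\ep)$ in the sense of distributions. Computing the Gateaux derivatives: $J'(u_\ep)[\phi]=\frac{n}{n-1}\beta_\ep\int_\Om e^{\beta_\ep|u_\ep|^{n/(n-1)}}|u_\ep|^{(2-n)/(n-1)}u_\ep\,\phi\,dx$, $G'(u_\ep)[\phi]=n\int_\Om(|\na u_\ep|^{n-2}\na u_\ep\cdot\na\phi-\al|u_\ep|^{n-2}u_\ep\phi)\,dx$, and $H'(u_\ep)[\phi]=\int_\Om\phi\,dx$. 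Testing with $\phi\equiv 1$ and with $\phi=u_\ep$ pins down the multipliers: $\phi=u_\ep$ gives $\frac{n}{n-1}\beta_\ep\lam_\ep=\theta n$ (using $G(u_\ep)=0$ and $H(u_\ep)=0$), so $\theta$ is a positive multiple of $\lam_\ep$; then $\phi\equiv 1$ fixes $\kappa$ in terms of $\mu_\ep,\nu_\ep$. Rearranging yields exactly the stated PDE with the Neumann condition $\pa u_\ep/\pa\nu=0$ arising from integrating by parts the $n$-Laplacian term against test functions not vanishing on $\pa\Om$. Elliptic regularity for the $n$-Laplacian (the right-hand side is in $L^\infty_{\mathrm{loc}}$ once we know $u_\ep\in L^\infty$, which follows from a Moser iteration using the exponential integrability) gives $u_\ep\in C^{1,\gamma}(\overline\Om)$; I would cite the standard results of DiBenedetto/Tolksdorf/Lieberman here rather than reprove them.

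Finally, the three asymptotic claims. For \eqref{eq:limitCep}: $C_\ep$ is monotone in $\ep$ and each $C_\ep\le \sup_{\mathcal H,\|u\|_{1,\al}\le 1}\int e^{\beta_n|u|^{n/(n-1)}}$, while for any fixed admissible $u$ with finite supremum-integrand, dominated/monotone convergence gives $\int e^{\beta_\ep|u|^{n/(n-1)}}\to\int e^{\beta_n|u|^{n/(n-1)}}$, so $\liminf_{\ep\to 0}C_\ep$ is at least the supremum, giving equality. For \eqref{eq:liminf}: since $\lam_\ep=\int_\Om e^{\beta_\ep|u_\ep|^{n/(n-1)}}|u_\ep|^{n/(n-1)}dx \ge \int_\Om e^{\beta_\ep|u_\ep|^{n/(n-1)}}dx - |\Om| = C_\ep - |\Om|$ using $e^t t \ge e^t - 1$ applied with $t=\beta_\ep|u_\ep|^{n/(n-1)}$ (more precisely $e^s\le 1+se^s$), and $C_\ep\to C_0:=\sup\int e^{\beta_n|u|^{n/(n-1)}}>|\Om|$ (taking a nonzero competitor shows strict inequality), we get $\liminf\lam_\ep\ge C_0-|\Om|>0$. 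For \eqref{eq:upbound}: $|\nu_\ep|\le\frac1{|\Om|}\int_\Om|u_\ep|^{n-1}dx$, bounded since $\|u_\ep\|_{W^{1,n}}$ is bounded (from $\|u_\ep\|_{1,\al}=1$ and Poincar\'e--Wirtinger); and $|\mu_\ep|\le\frac1{|\Om|}\int_\Om e^{\beta_\ep|u_\ep|^{n/(n-1)}}|u_\ep|^{1/(n-1)}dx$, which I would bound by $\frac{C}{\lam_\ep}\lam_\ep$ type arguments — split the domain where $|u_\ep|\le 1$ (integrand $\le C e^{\beta_\ep|u_\ep|^{n/(n-1)}}\le C\cdot C_\ep$ contributes $O(1)=O(\lam_\ep)$ by \eqref{eq:liminf}) and where $|u_\ep|>1$ (there $|u_\ep|^{1/(n-1)}\le |u_\ep|^{n/(n-1)}$, so the integral is $\le\lam_\ep$), giving $|\mu_\ep|/\lam_\ep\le c$. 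The main obstacle is the compactness/continuity step for the functional $J$ along the maximizing sequence — ensuring the strictly subcritical exponent really does yield uniform integrability of $e^{\beta_\ep|u_k|^{n/(n-1)}}$ in some $L^r$, $r>1$ — since everything else is either a routine variational computation or a citation to $p$-Laplacian regularity theory.
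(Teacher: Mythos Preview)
Your outline is largely sound --- the Euler--Lagrange derivation, the regularity citation, and the three asymptotic estimates \eqref{eq:limitCep}--\eqref{eq:upbound} are handled essentially as in the paper --- but there is a genuine gap in the compactness step, and it is precisely the step you flag as ``the main obstacle.''

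The issue is this. From $\|u_k\|_{1,\al}\le 1$ and the definition of $\lam_1(\Om)$ you only get
\[
\|\na u_k\|_n^n \le \Bigl(1-\tfrac{\al}{\lam_1(\Om)}\Bigr)^{-1}=:M,
\]
and when $\al>0$ this bound is strictly larger than $1$. Feeding $u_k/\|\na u_k\|_n$ into Cianchi's inequality \eqref{eq:Cianchi} therefore controls $\int_\Om e^{\gamma|u_k|^{n/(n-1)}}dx$ uniformly only for $\gamma\le \beta_n M^{-1/(n-1)}=\beta_n\bigl(1-\al/\lam_1(\Om)\bigr)^{1/(n-1)}<\beta_n$. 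For $\ep$ small, $\beta_\ep=\beta_n-\ep$ exceeds this threshold, so ``apply \eqref{eq:Cianchi} with a slightly larger exponent still below $\beta_n$'' does \emph{not} yield an $L^r$ bound on $e^{\beta_\ep|u_k|^{n/(n-1)}}$ for any $r>1$. The same defect undermines your preliminary argument for $C_\ep<\infty$: Trudinger's inequality gives finiteness for each fixed $u$, but not uniformly over the constraint set once $\beta_\ep$ lies in the gap $\bigl(\beta_n(1-\al/\lam_1(\Om))^{1/(n-1)},\,\beta_n\bigr)$.

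The paper closes this gap with a Lions--type concentration--compactness lemma (Lemma~\ref{Lions}), proved via the $\check{\rm C}$ern\'y--Cianchi--Hencl refinement. Along the maximizing sequence (normalized to $\|u_j\|_{1,\al}=1$) one passes to a weak limit $u_0$ and splits into two cases. If $u_0\equiv 0$, then $\|u_j\|_n\to 0$ forces $\|\na u_j\|_n\to 1$, and \emph{now} Cianchi's inequality applies directly to give the $L^q$ bound for some $q>1$; this leads to $C_\ep=|\Om|$, a contradiction. If $u_0\not\equiv 0$, the improved Lions exponent $p<\bigl(1-\|u_0\|_{1,\al}^n\bigr)^{-1/(n-1)}$ is strictly larger than $1$, which again furnishes the needed higher integrability. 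In both branches one obtains uniform integrability and hence convergence of the functional; the second branch also delivers $C_\ep<\infty$ a posteriori. Your proposal is missing exactly this dichotomy and the concentration--compactness input that drives it.
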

In the proof of Proposition \ref{subcritical}, we need  the following Lions type \cite{Lions1985} concentration--compactness principle for functions in $\mathcal H$.
\begin{lemma}\label{Lions}
Let $\{u_j\}_j\subset \mathcal H$ such that $\|u_j\|_{1,\al} =1$ and $u_j \rightharpoonup u_0$ in $W^{1,n}(\Om)$ then for any $0 < p< 1/(1-\|u_0\|_{1,\al}^n)^{1/(n-1)}$, it holds
\[
\limsup_{j\to\infty} \int_\Om e^{\beta_n p |u_j|^{\frac n{n-1}}}dx < \infty.
\]
\end{lemma}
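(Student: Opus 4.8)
The plan is to follow the classical Lions concentration–compactness strategy, adapted to the mean-zero setting. Write $u_j = u_0 + w_j$ where $w_j \rightharpoonup 0$ in $W^{1,n}(\Om)$. Since $\|u_j\|_{1,\al}=1$ and $\|\cdot\|_{1,\al}$ is a norm equivalent to the $W^{1,n}$-norm on $\mathcal H$ (using $\al < \lam_1(\Om)$), the Brezis–Lieb type expansion gives $\|\na u_j\|_n^n = \|\na u_0\|_n^n + \|\na w_j\|_n^n + o(1)$ and $\|u_j\|_n^n = \|u_0\|_n^n + o(1)$ by compactness of the embedding $W^{1,n}(\Om)\hookrightarrow L^n(\Om)$. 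Hence $\|w_j\|_{1,\al}^n \to 1 - \|u_0\|_{1,\al}^n =: \tau$. If $\tau = 0$ then $u_j \to u_0$ strongly and the conclusion is immediate by continuity of the exponential integral on convergent sequences; so assume $0 < \tau \le 1$. The key point is then that for any $p < \tau^{-1/(n-1)}$ we can find $q > 1$ with $p\, q^{1/(n-1)} \|w_j\|_{1,\al}^{n/(n-1)} \le \beta_n^{-1}\alpha_n =: \theta < 1$ for $j$ large, where we will exploit the gain coming from the mean-zero constraint.

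The main step is a pointwise/convexity inequality: for any $\de > 0$ there is $C_\de$ with $|u_j|^{\frac{n}{n-1}} \le (1+\de) |w_j|^{\frac{n}{n-1}} + C_\de |u_0|^{\frac{n}{n-1}}$, which follows from the elementary inequality $(a+b)^{n/(n-1)} \le (1+\de) a^{n/(n-1)} + C_\de\, b^{n/(n-1)}$. Therefore
\[
e^{\beta_n p |u_j|^{\frac n{n-1}}} \le e^{\beta_n p (1+\de) |w_j|^{\frac n{n-1}}} \, e^{\beta_n p C_\de |u_0|^{\frac n{n-1}}}.
\]
Apply Hölder's inequality with conjugate exponents $q$ and $q'$: the factor $e^{\beta_n p C_\de |u_0|^{\frac n{n-1}} q'}$ is integrable because $u_0 \in W^{1,n}(\Om) \subset \mathcal H$, so Cianchi's inequality \eqref{eq:Cianchi} (equivalently Theorem \ref{Main1} with $\al=0$, which is already known) applied to $u_0/\|\na u_0\|_n$ gives a finite bound — and this bound is uniform in $j$ since $u_0$ is fixed. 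For the other factor we need $\int_\Om e^{\beta_n p (1+\de) q |w_j|^{\frac n{n-1}}} dx$ bounded. Writing $w_j = \|w_j\|_{1,\al}\, \tilde w_j$ with $\|\tilde w_j\|_{1,\al}=1$, $\tilde w_j \in \mathcal H$, this integral equals $\int_\Om e^{\beta_n\, p(1+\de) q \|w_j\|_{1,\al}^{n/(n-1)} |\tilde w_j|^{\frac n{n-1}}} dx$, which is uniformly bounded by Cianchi's subcritical inequality \eqref{eq:Cianchi} provided $p(1+\de) q \|w_j\|_{1,\al}^{n/(n-1)} \le 1$ for all large $j$. Since $\|w_j\|_{1,\al}^{n/(n-1)} \to \tau^{1/(n-1)}$ and $p < \tau^{-1/(n-1)}$, we have $p\,\tau^{1/(n-1)} < 1$, so for $\de$ small and $q$ close enough to $1$ this inequality holds eventually. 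Combining the two factors via Hölder yields $\limsup_j \int_\Om e^{\beta_n p |u_j|^{\frac n{n-1}}} dx < \infty$.

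The main obstacle is organizing the interaction between the three small parameters $\de$, $q$, and the rate $\|w_j\|_{1,\al} \to \tau^{1/(n-1)}$: one must choose them in the right order — first fix $p < \tau^{-1/(n-1)}$, pick $q>1$ and $\de>0$ so that $p(1+\de)q\,\tau^{1/(n-1)} < 1$ with strict inequality, then use that $\|w_j\|_{1,\al}^{n/(n-1)}$ is eventually within the required margin. A secondary technical point is justifying the Brezis–Lieb splitting for the $n$-Dirichlet energy and the passage $\|u_j\|_n^n \to \|u_0\|_n^n$, both of which are standard: the former from the Brezis–Lieb lemma applied to $|\na u_j|^n$ together with weak convergence of $\na u_j$, the latter from Rellich compactness. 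Everything else reduces to applying the already-established inequality \eqref{eq:Cianchi} to fixed or renormalized functions, so no new analytic input beyond Cianchi's theorem is needed.
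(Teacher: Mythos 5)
Your approach is genuinely different from the paper's: the paper renormalizes $v_j = u_j/\|\nabla u_j\|_n$ (so $\|\nabla v_j\|_n = 1$), identifies the weak limit $v_0 = u_0/(1+\alpha\|u_0\|_n^n)^{1/n}$, and then simply \emph{cites} the concentration--compactness theorem of \v{C}ern\'y--Cianchi--Hencl \cite{Cerny2013} applied to $v_j$, finishing with the elementary observation that $p\|\nabla u_j\|_n^{n/(n-1)}$ is eventually below the threshold $(1-\|\nabla v_0\|_n^n)^{-1/(n-1)}$. You instead try to re-prove the hard part from scratch via a Brezis--Lieb decomposition $u_j = u_0 + w_j$, a pointwise Young-type inequality, and H\"older against Cianchi's inequality \eqref{eq:Cianchi}.

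There is a genuine gap in your argument at the decomposition step. You claim $\|\nabla u_j\|_n^n = \|\nabla u_0\|_n^n + \|\nabla w_j\|_n^n + o(1)$ ``by the Brezis--Lieb lemma applied to $|\nabla u_j|^n$ together with weak convergence of $\nabla u_j$.'' The Brezis--Lieb lemma requires \emph{pointwise a.e.} convergence of $\nabla u_j$ to $\nabla u_0$, which does not follow from weak convergence in $L^n$ when $n > 2$; unlike the functions $u_j$ themselves, the gradients have no Rellich-type compactness to produce an a.e.\ convergent subsequence. For $n=2$ the identity $\|\nabla u_j\|_2^2 = \|\nabla u_0\|_2^2 + 2\int\nabla u_0\cdot\nabla w_j + \|\nabla w_j\|_2^2$ makes the splitting automatic, but for $n>2$ there is no such algebraic identity, and one can construct bounded, oscillatory gradient sequences $\nabla w_j \rightharpoonup 0$ for which $\liminf_j\big(\|\nabla u_j\|_n^n - \|\nabla w_j\|_n^n\big) < \|\nabla u_0\|_n^n$ (for instance in dimension one for $n=4$, take $\nabla u_0 \equiv a > 0$ and $\nabla w_j = g(jx)$ periodic with $\int g = 0$, $\int g^2 \approx M^2\delta$, $\int g^3 \approx -M^3\delta$, $M > 3a/2$; then the cross terms $6a^2\int g^2 + 4a\int g^3$ are strictly negative). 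Without the splitting, the key bound $\limsup_j \|w_j\|_{1,\alpha}^n \le 1 - \|u_0\|_{1,\alpha}^n$ is unjustified, and the final parameter-choosing no longer closes. This is exactly the subtlety that \cite{Cerny2013} handles (via truncation and rearrangement rather than a naive splitting), which is why the paper delegates to that citation rather than arguing directly. A minor, fixable point: when applying Cianchi's inequality \eqref{eq:Cianchi} to $\tilde w_j = w_j/\|w_j\|_{1,\alpha}$, note that $\|\tilde w_j\|_{1,\alpha}=1$ gives $\|\nabla\tilde w_j\|_n^n = 1 + \alpha\|\tilde w_j\|_n^n \ge 1$, so one must further renormalize by $\|\nabla\tilde w_j\|_n \to 1$; the principal gap, however, is the Brezis--Lieb step.
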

\begin{proof}
Evidently, if $u_0\equiv 0$, then $\|u_j\|_n \to 0$ which implies $\|\na u_j\|_n \to 1$ as $j\to \infty$. Thus, the conclusion follows from \eqref{eq:Cianchi}.

We next consider the case $u_0\not\equiv 0$. By Sobolev embedding, we have
\[
\|\na u_j\|_n^n = 1 + \al \|u_j\|_n^n \to 1 + \al \|u_0\|_n^n.
\]
Denote $v_j = u_j /\|\na u_j\|_n$ then $\|\na v_j\|_n =1$ and 
\[
v_j \rightharpoonup \frac{u_0}{\lt( 1 + \al \|u_0\|_n^n\rt)^{1/n}} =:v_0 \qquad\text{\rm weakly in }\, W^{1,n}(\Om).
\]
By a result of $\check{\rm C}$ern\'y, Cianchi and Hencl \cite{Cerny2013}, we have for any $q < 1/(1-\|\na v_0\|_n^n)^{1/(n-1)}$
\[
\limsup_{j\to\infty} \int_\Om e^{\beta_n q |u_j|^{\frac n{n-1}}} dx < \infty.
\]
Notice that for any $p < 1/(1-\|u_0\|_{1,\al}^n)^{1/(n-1)}$ we have
\begin{align*}
\lim_{j\to\infty} p\|\na u_j\|_n^{\frac n{n-1}} =p \lt(1+\al\| u_0\|_n^n\rt)^{\frac 1{n-1}}  & <\frac{\lt(1+\al\| u_0\|_n^n\rt)^{\frac 1{n-1}}}{\lt(1-\|\na u_0\|_n^n + \al \|u_0\|_n^n\rt)^{\frac1{n-1}}} \\
&= (1-\|\na v_0\|_n^n)^{-\frac1{n-1}}. 
\end{align*}
Thus we can choose a $q < (1-\|\na v_0\|_n^n)^{-\frac1{n-1}}$ and $j_0$ such that $p\|\na u_j\|_n^{\frac n{n-1}} \leq q$ for any $j\geq j_0$. Remark that 
\[
\int_\Om e^{\beta_n p |u_j|^{\frac n{n-1}}} dx =\int_{\Om} e^{\beta_n p \|\na u_j\|_n^{\frac n{n-1}} |v_j|^{\frac n{n-1}}} dx \leq \int_{\Om} e^{\beta_n q |v_j|^{\frac n{n-1}}} dx,
\]
for any $j\geq j_0$. The conclusion hence follows from the result of $\check{\rm C}$ern\'y, Cianchi and Hencl applied to the sequence $v_j$.
\end{proof}

\begin{proof}[Proof of Proposition \ref{subcritical}]
Let $\{u_j\}_j$ be a maximizing sequence for $C_\ep$. Since $\al < \lam_1(\Om)$ then
\[
1 =\|\na u_j\|_n^n -\alpha \|u_j\|_n^n \geq \lt(1 -\frac\al {\lam_1(\Om)}\rt) \|\na u_j\|_n^n.
\]
Hence $u_j$ is bounded in $W^{1,n}(\Om)$. By Sobolev embedding, we can assume that $u_j \rightharpoonup u_\ep$ weakly in $W^{1,n}(\Om)$, $u_j\to u_\ep$ in $L^p(\Om)$ for any $p < \infty$ and $u_j\to u_\ep$ a.e. in $\Om$. Evidently, $u_\ep\in \mathcal H$ and $\|u_\ep\|_{1,\alpha} \leq 1$. If $u_\ep\equiv 0$, by Lemma \ref{Lions} we can choose $1< q < \beta_n/\beta_\ep$ such that $e^{\beta_\ep |u_j|^{\frac n{n-1}}}$ is bounded in $L^q(\Om)$, hence
\[
C_\ep = \lim_{j\to\infty} \int_\Om e^{\beta_\ep |u_j|^{\frac{n}{n-1}}} dx = |\Om|,
\]
which is impossible. Thus, we have $u_\ep\not\equiv 0$. Using again Lemma \ref{Lions}, we can choose $q>1$ such that $e^{\beta_\ep |u_j|^{\frac n{n-1}}}$ is bounded in $L^q(\Om)$. Hence
\[
C_\ep = \lim_{j\to\infty} \int_\Om e^{\beta_\ep |u_j|^{\frac{n}{n-1}}} dx = \int_\Om e^{\beta_\ep |u_\ep|^{\frac{n}{n-1}}} dx.
\]
It remains to check that $\|u_\ep\|_{1,\al} =1$. Indeed, if otherwise then $\|u_\ep\|_{1,\al} <1$, denote $v_\ep = u_\ep/\|u_\ep\|_{1,\al}$ then $v_\ep \in \mathcal H$ and $\|v_\ep\|_{1,\al} =1$ and
\[
C_\ep = \int_\Om e^{\beta_\ep |u_\ep|^{\frac{n}{n-1}}} dx < \int_\Om e^{\beta_\ep |v_\ep|^{\frac{n}{n-1}}} dx  \leq C_\ep,
\]
which is impossible.

An easy and straightforward computation show that $u_\ep$ satisfies the Euler--Lagrange equation \eqref{eq:EL}. By standard elliptic regularity to \eqref{eq:EL}, we have $u_\ep \in C^1(\o{\Om})$. 

Obviously,
\[
\limsup_{\ep\to 0} C_\ep \leq \sup_{u\in \mathcal H, \|u\|_{1,\al}\leq 1} \int_\Om e^{\beta_n |u|^{\frac{n}{n-1}}} dx.
\]
For any $u\in \mathcal H$ with $\|u\|_{n,\al}\leq 1$, by using Fatou's lemma, we have
\[
\int_\Om e^{\beta_n |u|^{\frac{n}{n-1}}} dx \leq \liminf_{\ep\to 0} \int_\Om e^{\beta_\ep |u|^{\frac{n}{n-1}}} dx \leq \liminf_{\ep\to 0} C_\ep.
\]
Taking the supremum over all such functions $u$, we get
\[
\liminf_{\ep \to 0} C_\ep \geq \sup_{u\in \mathcal H, \|u\|_{1,\al}\leq 1} \int_\Om e^{\beta_n |u|^{\frac{n}{n-1}}} dx.
\]
Combining these two estimates together, we get \eqref{eq:limitCep}.

Using the inequality $e^t \leq 1 + t e^t$, we get
\[
C_\ep =\int_\Om e^{\beta_\ep |u_\ep|^{\frac{n}{n-1}}} dx \leq |\Om| +\beta_\ep \lambda_\ep.
\]
This together \eqref{eq:limitCep} implies
\[
\beta_n \liminf_{\ep \to 0} \lam_\ep \geq \sup_{u\in \mathcal H, \|u\|_{1,\al}\leq 1} \int_\Om e^{\beta_n |u|^{\frac{n}{n-1}}} dx -|\Om| >0,
\]
as \eqref{eq:liminf}.

Since the inequality $t^{1/(n-1)} e^{\be_\ep t^{n/(n-1)}} \leq e^{\be_\ep} + t^{n/(n-1)} e^{\beta_\ep t^{n/(n-1)}}$ holds for any $t \geq 0$, hence $|\mu_\ep| \leq e^{\be_\ep} + \lam_\ep/|\Om|$. This together \eqref{eq:liminf} proves the first inequality in \eqref{eq:upbound}. The second inequality in \eqref{eq:upbound} is trivial.
\end{proof}

\section{Asymptotic behavior of extremal functions for the subcritical inequalities}
Denote $c_\ep =\max_{\o{\Om}} |u_\ep|$. Without loss of generality, we can assume that $c_\ep = u_\ep(x_\ep)$, otherwise we consider $-u_\ep$ instead of $u_\ep$, and $x_\ep \to p \in \o{\Om}$. If $c_\ep$ is bounded, then by applying elliptic estimates to \eqref{eq:EL}, we get that $u_\ep \to u^*$ in $C^1(\o{\Om})$ for some function $u^*$. This convergence implies that Theorems \ref{Main1} and \ref{Main2} hold. In the rest of this section, we only consider the case $c_\ep \to \infty$. We do not distinguish the sequence and subsequence, the interest reader should understand it from the context.

Since $u_\ep$ is bounded in $W^{1,n}(\Om)$ then we can assume that $u_\ep \rightharpoonup u_0$ weakly in $W^{1,n}(\Om)$, $u_\ep \to u_0$ in $L^q(\Om)$ for any $q < \infty$ and $u_\ep \to u_0$ a.e. in $\Om$. If $u_0\not\equiv 0$, then there exists $r >1$ such that $e^{\beta_\ep |u_\ep|^{n/(n-1)}}$ is bounded in $L^r(\Om)$. Applying elliptic estimates to \eqref{eq:EL} we get $c_\ep$ is bounded which is impossible. Thus $u_0 \equiv 0$.

We next claim that $p\in \pa \Om$. Indeed, if $p \in \Om$, we can choose $r >0$ such that $B_{r}(p) \subset \Om$. Let $\phi$ be a cut-off function in $B_{r}(p)$, i.e., $\phi \in C_0^\infty(B_{r}(p))$, $0\leq \phi \leq 1$ and $\phi = 1$ in $B_{r/2}(p)$. Note that $\phi u_\ep \in W_0^{1,n}(\Om)$ and 
\begin{align*}
\int_\Om |\na(\phi u_\ep)|^n dx &= \int_\Om|\phi \na u_\ep + u_\ep \na \phi|^n dx\\
&\leq (1+ \de)\int_\Om |\na u_\ep|^n \phi^n dx + \lt(1-(1+\de)^{\frac{1}{1-n}}\rt)^{1-n} \int_{\Om} |\na \phi|^n |u_\ep|^n dx\\
&\leq (1+ \de) + \lt[\al(1+\de) + C^n\lt(1-(1+\de)^{\frac{1}{1-n}}\rt)^{1-n}\rt] \|u_\ep\|_n^n,
\end{align*}
for any $\de >0$, where $C = \sup |\na \phi|$. Fix $\de < 1/4$, then for $\ep >0$ small enough, we get $\|\na(\phi u_\ep)\|_n^n \leq 1+2\de < 3/2$. Applying classical Moser--Trudinger inequality \eqref{eq:MT}, there exists $q >1$ such that $e^{\beta_\ep |\phi u_\ep|^{\frac n{n-1}}}$ is bounded in $L^q(\Om)$. In particular, $e^{\beta_\ep |u_\ep|^{\frac n{n-1}}}$ is bounded in $L^q(B_{r/2}(p)$. Using elliptic estimates to \eqref{eq:EL} in $B_{r/2}(p)$ we get that $u_\ep$ is bounded in $C^1(\o{B_{r/4}(p)})$. Hence $c_\ep$ is bounded which is impossible.

We next prove that 
\begin{equation}\label{eq:Dirac}
|\na u_\ep|^n dx \rightharpoonup  \de_p \qquad\text{\rm in measure sense}.
\end{equation}
Indeed, we have $\|\na u_\ep\|_n^n \to 1$ as $\ep \to 0$. Hence, if \eqref{eq:Dirac} does not hold, then there exists $\mu < 1$ and $r>0$ small such that
\[
\lim_{\ep\to 0} \int_{\Om \cap B_r(p)} |\na u_\ep|^n dx \leq \mu.
\]
Consider again cut-off function $\phi$ as above, and define $\phi_\ep =\phi u_\ep -\frac1{|\Om|} \int_\Om \phi u_\ep dx$. Thus $\phi_\ep \in \mathcal H$, and 
\begin{align*}
\int_\Om |\na \phi_\ep|^n dx &=\int_{\Om} |\phi\na u_\ep + u_\ep \na \phi|^n dx\\
&\leq (1+\de) \int_\Om |\na u_\ep|^n dx + C^n\lt(1 -(1+\de)^{\frac1{1-n}}\rt)^{1-n} \int_\Om |u_\ep|^n dx,
\end{align*}
for any $\de >0$. Fix a $\de >0$ such that $\de < (1-\mu)/(2\mu)$, we have
\[
\limsup_{\ep\to 0} \int_\Om |\na \phi_\ep|^n dx \leq (1+\de) \mu < \frac{1+\mu} 2.
\]
Thus for $\ep >0$ small enough, we get $\|\na \phi_\ep\|_n^n < (1+ \mu)/2 < 1$. By Cianchi's inequality \eqref{eq:Cianchi}, $e^{\beta_\ep |\phi_\ep|^{\frac n{n-1}}}$ is bounded in $L^q(\Om)$ for some $q >1$. We again have
\[
|\phi u_\ep|^{\frac n{n-1}} \leq (1+t)|\phi_\ep|^{\frac{n}{n-1}} + \lt(1 -(1+t)^{1-n}\rt)^{\frac1{1-n}} \lt|\frac1{|\Om|} \int_\Om \phi u_\ep dx\rt|^{\frac n{n-1}},
\]
for any $t>0$. The second term on the right hand side tends to zero as $\ep \to 0$. Hence by choose $t >0$ small enough, we have that $e^{\beta_\ep |\phi u_\ep|^{\frac n{n-1}}}$ is bounded in $L^{q'}(\Om)$ for some $q'>1$. In particular, $e^{\beta_\ep |u_\ep|^{\frac n{n-1}}}$ is bounded in $L^{q'}(\Om \cap B_{r/2}(p))$. Note that $\pa_\nu u_\ep =0$ on $\pa \Om$, by applying elliptic estimates to \eqref{eq:EL} in $\Om \cap B_{r/2}(p)$, we get that $u_\ep$ is bounded near $p$ which is impossible.

Denote $r_\ep^n = \lam_\ep c_\ep^{-\frac n{n-1}} e^{-\beta_\ep c_\ep^{\frac n{n-1}}}$. We then have $\lim_{\ep \to 0} r_\ep =0$. Indeed, for any $0< \gamma < \beta_n$, we have $\beta_\ep -\gamma >0$ for $\ep >0$ small enough. Hence
\begin{equation}\label{eq:convergerep}
r_\ep^n c_\ep^{\frac{n}{n-1}} e^{\gamma c_\ep^{\frac n{n-1}}} \leq \int_\Om e^{\gamma |u_\ep|^{\frac n{n-1}}} |u_\ep|^{\frac n{n-1}} dx \to 0, 
\end{equation}
here we use H\"older inequality, \eqref{eq:Cianchi} and the fact $u_\ep \to 0$ in $L^q(\Om)$ for any $q < \infty$.

We continue studying the asymptotic behavior of $u_\ep$ near $p$. Following the argument in \cite{Yang07} we take $(V,\phi)$ a normal coordinate system around $p$ such that $\phi(p) = 0$, $\phi(\pa \Om \cap V) = \{y\in \R^n: y_1 =0\} \cap B_1(0)$ and $\phi(\Om \cap V) = \{y\in \R^n: y_1 >0\} \cap B_1(0)$. In this coordinate, the original metric $g =dx_1^2 +\cdots dx_n^2$ has the form $g =\sum_{i,j =1}^ng_{ij} dy_idy_j$ with 
\[
g_{ij} = g_{ij}(y) = \sum_{k=1}^n \frac{\pa {\phi^{-1}}^k}{\pa y_i}\frac{\pa {\phi^{-1}}^k}{\pa y_j},\quad g_{ij}(0) = \de_{ij}, \quad \frac{\pa g_{ij}}{\pa y_l}(0) =0,
\]
for any $i,j,l$. We also use $g$ to denote matrix $(g_{ij})_{n\times n}$ and use $(g^{ij})_{n\times n}$ to denote the inverse of $g$. In this coordinate system, we have the following relation: for a function $f$ on $V$, denote $h =f\circ \phi^{-1}$ the function on $B_1(0)$, then $|\na f(x)| = |\na_g h (\phi(x))|_g$ and $\Delta_n f(x) = \Delta_{g,n} h(\phi(x))$ where
\begin{equation}\label{eq:nlap}
\Delta_{g,n} h = \frac1{\sqrt{\text{\rm det}(g)}}\sum_{i,j=1}^n\frac{\pa}{\pa y_i}\lt[g^{ij}\sqrt{\text{\rm det}(g)} \lt(\sqrt{\sum_{k,l =1}^n g^{kl} \frac{\pa h}{\pa y_k} \frac{\pa h}{\pa y_l}}\rt)^{n-2} \frac{\pa h}{\pa y_j}\rt],
\end{equation}
is $n-$Laplace with respect to $g$. Let us define the function $\tilde u_\ep$ on $B_1(0)$ by
\[
\tilde u_\ep(y) = 
\begin{cases}
u_\ep \circ \phi^{-1}(y_1,y')&\mbox{if $y_1 \geq 0$,}\\
u_\ep \circ \phi^{-1}(-y_1,y') &\mbox{if $y_1 < 0$,}
\end{cases}
\]
here we write $y\in \R^n$ by $(y_1,y')$. From \eqref{eq:nlap}, we see that $\tilde u_\ep$ satisfies
\[
-\De_{g,n}\tilde u_\ep = \frac1{\lam_\ep} e^{\beta_\ep |\tilde u_\ep|^{\frac n{n-1}}} |\tilde u_\ep|^{\frac{2-n}{n-1}}\tilde u_\ep + \al |\tilde u_\ep|^{n-2} \tilde u_\ep -\frac{\mu_\ep+\al \lam_\ep \nu_\ep}{\lam_\ep},
\] 
on $B_1(0)$.

Denote $y_\ep = \phi(x_\ep)$ and $\Om_\ep =\{y\in \R^n: y_\ep + r_\ep y\in B_1(0)\}$. we define two sequences of functions on $\Om_\ep$ by
\[
\psi_\ep(y) = \frac1{c_\ep} \tilde u_\ep(y_\ep + r_\ep y),\qquad \vphi_\ep =c_\ep^{\frac1{n-1}} (\tilde u_\ep(y_\ep +r_\ep y) -c_\ep).
\]
Then we have
\begin{equation}\label{eq:psiep}
-\Delta_{g,n}\psi_\ep =c_\ep^{-n} \psi_\ep |\psi_\ep|^{n-2} e^{\beta_\ep(|\tilde u_\ep|^{\frac n{n-1}}-c_\ep^{\frac{n}{n-1}})}+\alpha r_\ep^{n} |\psi_\ep|^{n-2} \psi_\ep -\frac{r_\ep^n}{c_\ep^{n-1}}\frac{\mu_\ep+\al \lam_\ep \nu_\ep}{\lam_\ep}.
\end{equation}
and
\begin{equation}\label{eq:vphiep}
-\De_{g,n} \vphi_\ep=\psi_\ep |\psi_\ep|^{n-2} e^{\beta_\ep(|\tilde u_\ep|^{\frac n{n-1}}-c_\ep^{\frac{n}{n-1}})}+\alpha c_\ep^nr_\ep^{n} |\psi_\ep|^{n-2} \psi_\ep -c_\ep r_\ep^n\frac{\mu_\ep+\al \lam_\ep \nu_\ep}{\lam_\ep}
\end{equation}
on $\Om_\ep$.
\begin{lemma}\label{convergepsiep}
It holds $\psi_\ep \to 1$ in $C^1_{\rm loc}(\R^n)$.
\end{lemma}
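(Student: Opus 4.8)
The plan is to read \eqref{eq:psiep} as a small perturbation of the $n$-harmonic equation on all of $\R^n$, and then combine uniform interior $C^{1,\al}$ estimates with a Liouville-type argument. First I would record the elementary facts. Since $|\tilde u_\ep|\le c_\ep$ on $B_1(0)$ we have $|\psi_\ep|\le 1$ on $\Om_\ep$ and $\psi_\ep(0)=1$; since $x_\ep\to p$ with $\phi(p)=0$ and $r_\ep\to 0$ (established via \eqref{eq:convergerep}), the domains $\Om_\ep$ exhaust $\R^n$, the rescaled metric $g(y_\ep+r_\ep\,\cdot\,)$ converges to the Euclidean metric in $C^1_{\rm loc}(\R^n)$ and stays uniformly elliptic on each ball, so $\De_{g,n}$ acting on the rescaled functions approaches the standard $n$-Laplacian $\De_n$. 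Because of the Neumann condition $\pa_\nu u_\ep=0$, the even reflection $\tilde u_\ep$ is a genuine weak solution across the image of $\pa\Om$, hence $\psi_\ep$ is a bounded weak solution of \eqref{eq:psiep} on all of $\Om_\ep$.

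Next I would show the right-hand side $f_\ep$ of \eqref{eq:psiep} tends to $0$ uniformly on compact subsets of $\R^n$. Indeed $|\psi_\ep|^{n-1}e^{\be_\ep(|\tilde u_\ep|^{n/(n-1)}-c_\ep^{n/(n-1)})}\le 1$ since $|\tilde u_\ep|\le c_\ep$, so the first term is $\le c_\ep^{-n}\to 0$; the second term is $\le \al r_\ep^n\to 0$; and the third term is $\le r_\ep^n c_\ep^{-(n-1)}(1+\al)\,c$ by \eqref{eq:upbound} and \eqref{eq:liminf}, which also tends to $0$. Thus $\|\psi_\ep\|_{L^\infty}\le 1$ and $\|f_\ep\|_{L^\infty(K)}\to 0$ for every compact $K$.

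At this point the key input is the local $C^{1,\al}$ regularity theory for quasilinear degenerate elliptic equations of $n$-Laplacian type (DiBenedetto, Tolksdorf, Lieberman): on each ball $B_R(0)$ the $C^{1,\al}(B_{R/2}(0))$ norm of $\psi_\ep$ is bounded in terms only of $n$, $R$, $\|\psi_\ep\|_{L^\infty(B_R)}$, $\|f_\ep\|_{L^\infty(B_R)}$, and the (uniformly controlled) ellipticity and $C^1$-norm of the coefficients $g^{ij}(y_\ep+r_\ep\,\cdot\,)$. Hence $\{\psi_\ep\}$ is bounded in $C^{1,\al}_{\rm loc}(\R^n)$ for some $\al\in(0,1)$, and by Arzel\`a--Ascoli a subsequence converges in $C^1_{\rm loc}(\R^n)$ to some $\psi$ with $|\psi|\le 1$ and $\psi(0)=1$; passing to the limit in the weak formulation, using $f_\ep\to 0$ and $g^{ij}\to\de^{ij}$, gives $-\De_n\psi=0$ on $\R^n$. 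Finally, by the Liouville theorem for bounded entire $n$-harmonic functions, equivalently the strong maximum principle (since $\psi$ attains its maximum $1$ at the origin), $\psi\equiv 1$; as the limit is independent of the subsequence, the whole family satisfies $\psi_\ep\to 1$ in $C^1_{\rm loc}(\R^n)$.

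The main obstacle is making the $C^{1,\al}$ estimate genuinely uniform in $\ep$: one must verify that the regularity constants depend only on the sup-norms of $\psi_\ep$ and $f_\ep$ and on the ellipticity structure, which depends on $\ep$ only through $g(y_\ep+r_\ep\,\cdot\,)$ and is therefore controlled once $\ep$ is small; the degeneracy of $\De_n$ at critical points of $\psi_\ep$ is exactly what the DiBenedetto--Tolksdorf theory is built to handle. A secondary, but routine, point is the Liouville/maximum-principle step for the $n$-Laplacian.
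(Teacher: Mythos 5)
Your proposal is correct and follows essentially the same route as the paper: show the right-hand side of \eqref{eq:psiep} tends to $0$ uniformly on compacta using \eqref{eq:upbound}, use $|\psi_\ep|\le\psi_\ep(0)=1$, apply interior elliptic estimates (Tolksdorf/DiBenedetto-type) to get $C^1_{\rm loc}$ compactness, pass to a limit solving $-\De_n\psi=0$, and conclude $\psi\equiv 1$ by the Liouville theorem for $n$-harmonic functions. You have merely filled in the details (reflection giving a weak solution across the flattened boundary, convergence of the rescaled metric to the Euclidean one, Arzel\`a--Ascoli, passage to the limit in the weak formulation) that the paper's two-line proof leaves implicit.
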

\begin{proof}
It follows from \eqref{eq:upbound} and \eqref{eq:psiep} that
\[
|\Delta_{g,n} \psi_\ep| \leq c_\ep^{-n} + \al r_\ep^n + c \frac{r_\ep^n}{c_\ep^{n-1}} \to 0,
\]
as $\ep \to 0$ and $\psi_\ep \leq \psi_\ep (0) =1$. Applying elliptic estimates and the Liouville theorem for $n-$harmonic functions, we get the conclusion.
\end{proof}

\begin{lemma}\label{convergevphiep}
It holds $\vphi_\ep \to \vphi$ in $C^1_{\rm loc}(\R^n)$ with
\begin{equation}\label{eq:vphifunct}
\vphi(x) =-\frac{n-1}{\beta_n} \ln \lt(1+ \lt(\frac{\om_{n-1}}{2n}\rt)^{\frac1{n-1}} |x|^{\frac n{n-1}}\rt).
\end{equation}
\end{lemma}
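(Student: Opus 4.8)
My plan is the standard blow-up scheme: establish local uniform bounds for $\vphi_\ep$, extract a limit, identify its equation and show it has finite total mass, and then invoke a Liouville-type classification. Throughout I use Lemma~\ref{convergepsiep} ($\psi_\ep\to1$ in $C^1_{\rm loc}$) and the fact that $\vphi_\ep\le 0=\vphi_\ep(0)$, since $\tilde u_\ep\le c_\ep=\tilde u_\ep(y_\ep)$; note also $y_\ep\to0$ and $r_\ep\to0$, so $\Om_\ep$ exhausts $\R^n$.

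\emph{Local bounds and precompactness.} First I would bound the right-hand side of \eqref{eq:vphiep} on a fixed ball $B_R$. Because $|\tilde u_\ep|\le c_\ep$, the exponent $\beta_\ep(|\tilde u_\ep|^{\frac n{n-1}}-c_\ep^{\frac n{n-1}})$ is $\le0$, so the first term is nonnegative and $\le\psi_\ep|\psi_\ep|^{n-2}\le C_R$ for $\ep$ small; the other two terms tend to $0$ uniformly on $B_R$ because \eqref{eq:convergerep} forces $c_\ep^a r_\ep^n\to0$ for every fixed $a>0$ while \eqref{eq:upbound} and \eqref{eq:liminf} control the coefficients. Hence $0\le -\De_{g,n}\vphi_\ep=:f_\ep\le C_R$ on $B_R$. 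Now $-\vphi_\ep\ge0$ is a genuine nonnegative solution of the quasilinear equation $-\De_{g,n}(-\vphi_\ep)=-f_\ep$ with $\|f_\ep\|_{L^\infty(B_R)}\le C_R$, so the Harnack inequality for $n$-Laplacian type operators gives $\sup_{B_{R/2}}(-\vphi_\ep)\le C_R'\bigl(\inf_{B_{R/2}}(-\vphi_\ep)+1\bigr)=C_R'$, where I used $0\le\inf_{B_{R/2}}(-\vphi_\ep)\le(-\vphi_\ep)(0)=0$. Thus $\vphi_\ep$ is bounded in $L^\infty_{\rm loc}(\R^n)$, and interior $C^{1,\gamma}$ estimates for the $n$-Laplacian yield, along a subsequence, $\vphi_\ep\to\vphi$ in $C^1_{\rm loc}(\R^n)$ with $\vphi\le0=\vphi(0)$ and $\na\vphi(0)=0$.

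\emph{Limit equation and total mass.} Since $\vphi_\ep$ is locally bounded, writing $\psi_\ep=1+c_\ep^{-\frac n{n-1}}\vphi_\ep$ and using Taylor's formula gives $\beta_\ep c_\ep^{\frac n{n-1}}(\psi_\ep^{\frac n{n-1}}-1)\to\frac n{n-1}\beta_n\vphi$ locally uniformly, so the first term of \eqref{eq:vphiep} converges locally uniformly to $e^{\frac n{n-1}\beta_n\vphi}$; since also $g_{ij}(y_\ep+r_\ep y)\to\de_{ij}$, $\sqrt{\det g}\to1$, and the lower-order terms vanish, passing to the limit in the weak form of \eqref{eq:vphiep} shows that $\vphi$ solves $-\De_n\vphi=e^{\frac n{n-1}\beta_n\vphi}$ on $\R^n$, with $\De_n$ the Euclidean $n$-Laplacian. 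For the total mass, fix $R$; by Fatou's lemma and the local uniform convergence, $\int_{B_R}e^{\frac n{n-1}\beta_n\vphi}\,dy\le\liminf_{\ep\to0}\int_{B_R\cap\Om_\ep}\psi_\ep|\psi_\ep|^{n-2}e^{\beta_\ep(|\tilde u_\ep|^{\frac n{n-1}}-c_\ep^{\frac n{n-1}})}\sqrt{\det g}\,dy$. Rewriting the integrand through the Euler--Lagrange equation \eqref{eq:EL} and the defining relation $r_\ep^n=\lam_\ep c_\ep^{-\frac n{n-1}}e^{-\beta_\ep c_\ep^{\frac n{n-1}}}$, and changing variables $x=\phi^{-1}(y_\ep+r_\ep y)$ on the two half-balls (so $\sqrt{\det g}\,dy=r_\ep^{-n}\,dx$, the even reflection contributing a factor $2$), this integral is bounded by $\frac{2c_\ep}{\lam_\ep}\int_{\phi^{-1}(B_{Rr_\ep}(y_\ep))}e^{\beta_\ep|u_\ep|^{\frac n{n-1}}}|u_\ep|^{\frac1{n-1}}\,dx$. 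On $\phi^{-1}(B_{Rr_\ep}(y_\ep))$ we have $u_\ep/c_\ep=\psi_\ep\to1$ uniformly, hence $|u_\ep|\le c_\ep$ and $|u_\ep|^{\frac n{n-1}}\ge(m_\ep c_\ep)^{\frac n{n-1}}$ with $m_\ep:=\inf_{B_R}\psi_\ep\to1$, so this last integral is $\le c_\ep^{\frac1{n-1}}(m_\ep c_\ep)^{-\frac n{n-1}}\lam_\ep$. Altogether $\int_{B_R}e^{\frac n{n-1}\beta_n\vphi}\,dy\le 2$ for every $R$, so $\int_{\R^n}e^{\frac n{n-1}\beta_n\vphi}\,dx\le2<\infty$.

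\emph{Classification.} Finally, $\vphi\in C^1(\R^n)$ solves $-\De_n\vphi=e^{\frac n{n-1}\beta_n\vphi}$ with $\vphi\le0=\vphi(0)$, $\na\vphi(0)=0$ and finite total mass, so the classification of entire finite-mass solutions of the $n$-dimensional Liouville equation (radial symmetry about the maximum point followed by an ODE analysis) identifies $\vphi$ with the function in \eqref{eq:vphifunct}; since the limit is unique, the whole family converges, proving the lemma. The main obstacle is the local bound: one must recognise $-\vphi_\ep$ as a bona fide nonnegative solution with bounded right-hand side and apply the quasilinear Harnack inequality, the normalisation $\vphi_\ep(0)=0$ being exactly what annihilates the infimum term. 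The mass estimate is the second delicate point, as it passes through the boundary normal coordinates --- the Jacobian $\sqrt{\det g}$ near $\pa\Om$, and the reflection factor $2$, which is precisely the source of the $1/2$ in $\beta_n$ --- and hinges on the crude bound $|u_\ep|^{\frac n{n-1}}\ge(m_\ep c_\ep)^{\frac n{n-1}}$ near the peak; one should also ensure that the scaled peak ends up at the origin (i.e.\ that $y_{\ep,1}/r_\ep$ stays bounded) and pin down the precise classification theorem being used.
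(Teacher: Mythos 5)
Your proof is correct and follows the same overall architecture as the paper's (Harnack inequality for local $L^\infty$ bounds, Tolksdorf-type $C^{1,\gamma}$ interior estimates, passage to the limit equation via the Taylor expansion of $\psi_\ep^{n/(n-1)}-1$, a mass bound of $2$, then the Liouville classification). Two points where your treatment differs and is arguably cleaner: (i) for the local bound you explicitly identify $-\vphi_\ep\ge 0$ as the nonnegative solution to which the Serrin--Harnack inequality applies and use the normalisation $-\vphi_\ep(0)=0$ to kill the infimum term — the paper simply says ``applying the Harnack inequality'' without spelling this out; (ii) for the mass bound you produce the direct chain $\int_{B_R}e^{\frac n{n-1}\beta_n\vphi}\,dy\le \frac{2c_\ep}{\lam_\ep}\int e^{\beta_\ep u_\ep^{n/(n-1)}}u_\ep^{1/(n-1)}\,dx\le 2 m_\ep^{-n/(n-1)}\to 2$ using only $m_\ep c_\ep\le u_\ep\le c_\ep$ near the peak and $\lam_\ep=\int_\Om u_\ep^{n/(n-1)}e^{\beta_\ep u_\ep^{n/(n-1)}}\,dx$, which avoids the paper's ratio argument (comparing the whole-ball integral with the integral over $\{y_1>-a\}$, where $a=\lim y_{\ep,1}/r_\ep$). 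Your route does not require tracking $y_{\ep,1}/r_\ep$ to obtain the mass bound, only to justify the reflection factor $2$, and the bound itself makes the source of that $2$ transparent. (Incidentally, your exponent $|u_\ep|^{1/(n-1)}$ in the rewritten integrand is the correct one coming from \eqref{eq:EL}; the displayed equations \eqref{eq:psiep}--\eqref{eq:vphiep} in the paper carry $\psi_\ep|\psi_\ep|^{n-2}$ where $\psi_\ep|\psi_\ep|^{(2-n)/(n-1)}$ is meant, the two agreeing only for $n=2$, so you in effect repaired a misprint.) The only thing I would ask you to make explicit is the citation for the classification step (Esposito \cite{Esposito}, or the end of the proof of Lemma~3.6 in \cite{Yang07}), which you gesture at but do not name.
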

\begin{proof}
Fix a $R >0$. Since $y_\ep, r_\ep \to 0$, hence $y_\ep + r_\ep B_R(0) \subset B_1(0)$ for $\ep$ small enough. Applying the Hacnack inequality for an $n-$Laplace equation \cite{Serrin64} and \eqref{eq:upbound}, \eqref{eq:convergerep} and Lemma \ref{convergepsiep} to equation \eqref{eq:vphiep}, we get that $\vphi_\ep$ is bounded in $L^\infty(B_R(0))$. Then by elliptic estimates \cite{Tolksdorf}, we obtain that $\vphi_\ep$ is bounded in $C^{1,\gamma}(B_{R/2}(0))$ for some $0 < \gamma < 1$, whence $\vphi_\ep \to \vphi$ in $C^1(B_{R/4}(0))$. Since $R >0$ is arbitrary, then $\vphi_\ep \to \vphi$ in $C^1_{\rm loc}(\R^n)$.

It remains to find the form of $\vphi$. By Lemma \ref{convergepsiep}, we have
\begin{align}\label{eq:1}
|\tilde u_\ep(y_\ep +r_\ep y)|^{\frac n{n-1}}-c_\ep^{\frac{n}{n-1}}&=c_\ep^{\frac n{n-1}} \lt(|\psi_\ep(y)|^{\frac{n}{n-1}} -1\rt)\notag\\
&=c_\ep^{\frac n{n-1}} \lt( (1+ (\psi_\ep -1))^{\frac{n}{n-1}} -1\rt)\notag\\
&= c_\ep^{\frac n{n-1}} \lt(\frac n{n-1}(\psi_\ep -1) + O((\psi_\ep -1)^2)\rt)\notag\\
&= \frac n{n-1} \vphi_\ep + O(|\psi_\ep -1|),
\end{align}
uniformly in $B_R(0)$. Notice that $g(y_\ep + r_\ep y) \to (\de_{ij})_{n\times n}$ uniformly in $B_R(0)$ when $\ep \to 0$. This together \eqref{eq:vphiep}, \eqref{eq:upbound}, \eqref{eq:convergerep} and \eqref{eq:1} shows that $\vphi$ satisfies
\begin{equation}\label{eq:vphiequation}
\begin{cases}
-\De_n \vphi = e^{\frac n{n-1}\beta_n \vphi}&\mbox{in $\R^n$,}\\
\vphi(x) \leq \vphi(0) =0&\mbox{$\forall\, x\in \R^n$.}
\end{cases}
\end{equation}
Moreover, for any $R >0$, by \eqref{eq:1} we have
\begin{align*}
\int_{B_R(0)} e^{\frac{n}{n-1}\beta_n \vphi} dy &= \lim_{\ep\to 0} \int_{B_R(0)} e^{\beta_\ep(|\tilde u_\ep(y_\ep + r_\ep y)|^{\frac n{n-1}} -c_\ep^{\frac n{n-1}})} dy\\
&=\lim_{\ep \to 0} \frac{c_\ep^{\frac n{n-1}} \int_{B_{Rr_\ep}(y_\ep)} e^{\beta_\ep |\tilde u_\ep(y)|^{\frac n{n-1}}} dy}{\int_\Om |u_\ep|^{\frac n{n-1}} e^{\beta_\ep |u_\ep|^{\frac n{n-1}}}dx}\\
&\leq \lim_{\ep \to 0} \frac{c_\ep^{\frac n{n-1}} \int_{B_{Rr_\ep}(y_\ep)} e^{\beta_\ep |\tilde u_\ep(y)|^{\frac n{n-1}}} dy}{\int_{B_{Rr_\ep}(y_\ep)\cap \{y: y_1>0\}} |\tilde u_\ep|^{\frac n{n-1}} e^{\beta_\ep |\tilde u_\ep|^{\frac n{n-1}}} \sqrt{\text{\rm det}(g)} dy}\\
&=\lim_{\ep \to 0} (1+o_{\ep,R}(1))\frac{\int_{B_{Rr_\ep}(y_\ep)} e^{\beta_\ep |\tilde u_\ep(y)|^{\frac n{n-1}}} dy}{\int_{B_{Rr_\ep}(y_\ep)\cap \{y: y_1>0\}} e^{\beta_\ep |\tilde u_\ep|^{\frac n{n-1}}} dy},\\
&=\lim_{\ep \to 0}\frac{\int_{B_{R}(0)} e^{\beta_\ep (|\tilde u_\ep(y_\ep +r_\ep y)|^{\frac n{n-1}}-c_\ep^{\frac{n}{n-1}})} dy}{\int_{B_{R}(0)\cap \{y: y_1>-\frac{{y_\ep}_1}{r_\ep}\}} e^{\beta_\ep (|\tilde u_\ep(y_\ep +r_\ep y)|^{\frac n{n-1}}-c_\ep^{\frac n{n-1}})} dy}
\end{align*}
here $o_{\ep,R}(1) \to 0$ as $\ep \to 0$ and $R$ is fixed and ${y_\ep}_1$ is the first coordinate of $y_\ep$. Suppose that ${y_\ep}_1/r_\ep \to a \geq 0$ as $\ep \to 0$, then
\[
\int_{B_R(0)} e^{\frac{n}{n-1}\beta_n \vphi} dy \leq \frac{\int_{B_{R}(0)} e^{\frac n{n-1} \beta_n \vphi} dy}{\int_{B_{R}(0)\cap \{y: y_1>-a\}} e^{\frac n{n-1} \beta_n \vphi} dy} \leq 2.
\]
Letting $R\to \infty$, we get $\int_{\R^n} e^{\frac{n}{n-1}\beta_n \vphi} dy \leq 2$. Using the argument at the end of the proof of Lemma $3.6$ in \cite{Yang07} or applying a recent classification result of Esposito \cite{Esposito}, we get the form of $\vphi$ as \eqref{eq:vphifunct}. 
\end{proof}
Notice that $\int_{\R^n} e^{\frac n{n-1}\beta_n \vphi} dy =2$ and hence the argument in the proof of Lemma \ref{convergevphiep} above implies that ${y_\ep}_1/r_\ep \to 0$ as $\ep \to 0$.

For $c >1$, denote $u_{\ep,c} =\min\{u_\ep, c_\ep/c\}$ we have the following
\begin{lemma}\label{truncation}
It holds $\lim_{\ep\to 0} \int_\Om |\na u_{\ep,c}|^n dx = 1/c$ for any $c >1$.
\end{lemma}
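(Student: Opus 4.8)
My plan is to test the Euler--Lagrange equation \eqref{eq:EL} against the truncation $u_{\ep,c}$ itself. Since $\na u_{\ep,c}=\na u_\ep\,\ind_{\{u_\ep<c_\ep/c\}}$ a.e., one has $\int_\Om|\na u_{\ep,c}|^n\,dx=\int_{\{u_\ep<c_\ep/c\}}|\na u_\ep|^n\,dx$, so multiplying \eqref{eq:EL} by $u_{\ep,c}\in W^{1,n}(\Om)$ and integrating by parts (the boundary integral vanishes since $\pa u_\ep/\pa\nu=0$ on $\pa\Om$) gives $\int_\Om|\na u_{\ep,c}|^n\,dx=\lam_\ep^{-1}\int_\Om e^{\beta_\ep|u_\ep|^{\frac n{n-1}}}|u_\ep|^{\frac{2-n}{n-1}}u_\ep\,u_{\ep,c}\,dx+o(1)$: the two lower-order terms of \eqref{eq:EL} contribute $o(1)$ because $|u_{\ep,c}|\le|u_\ep|$ and $u_\ep\to0$ in every $L^q(\Om)$, while $|\mu_\ep|/\lam_\ep$, $|\nu_\ep|$ are bounded by \eqref{eq:upbound} and $\lam_\ep$ is bounded below by \eqref{eq:liminf}. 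Splitting the remaining integral over $\{u_\ep\ge c_\ep/c\}$, where $u_\ep>0$ and $u_{\ep,c}=c_\ep/c$, and over $\{u_\ep<c_\ep/c\}$, where $u_{\ep,c}=u_\ep$ and $|u_\ep|^{\frac{2-n}{n-1}}u_\ep\,u_\ep=|u_\ep|^{\frac n{n-1}}$, this identity reads $\int_\Om|\na u_{\ep,c}|^n\,dx=\tfrac1c F_\ep+E_\ep+o(1)$, where
\[
F_\ep:=\frac{c_\ep}{\lam_\ep}\int_{\{u_\ep\ge c_\ep/c\}}e^{\beta_\ep u_\ep^{\frac n{n-1}}}u_\ep^{\frac1{n-1}}\,dx,\qquad
E_\ep:=\frac1{\lam_\ep}\int_{\{u_\ep<c_\ep/c\}}e^{\beta_\ep|u_\ep|^{\frac n{n-1}}}|u_\ep|^{\frac n{n-1}}\,dx.
\]
Thus it is enough to prove $F_\ep\to1$ and $E_\ep\to0$.

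The heart of the matter is the concentration estimate that essentially all of $\lam_\ep$ sits near $x_\ep$ at the scale $r_\ep$. Working in the normal coordinate chart of Section~3 with the even reflection $\tilde u_\ep$ and the rescaled functions $\psi_\ep,\vphi_\ep$, a change of variables $x\mapsto y_\ep+r_\ep z$ together with $\lam_\ep=r_\ep^n c_\ep^{\frac n{n-1}}e^{\beta_\ep c_\ep^{\frac n{n-1}}}$, the expansion \eqref{eq:1}, $\psi_\ep\to1$ (Lemma~\ref{convergepsiep}), $\beta_\ep\to\beta_n$, $\det g\to1$ and ${y_\ep}_1/r_\ep\to0$ shows that, for each fixed $L>0$,
\[
\frac1{\lam_\ep}\int_{B_{Lr_\ep}(x_\ep)\cap\Om}e^{\beta_\ep u_\ep^{\frac n{n-1}}}u_\ep^{\frac n{n-1}}\,dx\ \to\ m_L:=\int_{B_L(0)\cap\{z_1>0\}}e^{\frac n{n-1}\beta_n\vphi}\,dz\qquad(\ep\to0),
\]
and $m_L\to1$ as $L\to\infty$ since $\vphi$ is radial with $\int_{\R^n}e^{\frac n{n-1}\beta_n\vphi}\,dz=2$. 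As $\lam_\ep^{-1}\int_\Om e^{\beta_\ep|u_\ep|^{\frac n{n-1}}}|u_\ep|^{\frac n{n-1}}\,dx=1$ by the very definition of $\lam_\ep$, this gives
\begin{equation}\label{eq:concln}
\lim_{L\to\infty}\ \limsup_{\ep\to0}\ \frac1{\lam_\ep}\int_{\Om\setminus B_{Lr_\ep}(x_\ep)}e^{\beta_\ep|u_\ep|^{\frac n{n-1}}}|u_\ep|^{\frac n{n-1}}\,dx=0.
\end{equation}

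I then combine \eqref{eq:concln} with $\psi_\ep\to1$. Since $\psi_\ep\to1$ uniformly on $\overline{B_L(0)}$, for $\ep$ small we have $u_\ep>c_\ep/c$ on $B_{Lr_\ep}(x_\ep)\cap\Om$, hence $\{u_\ep<c_\ep/c\}\subset\Om\setminus B_{Lr_\ep}(x_\ep)$ and $E_\ep$ is bounded above by the integral in \eqref{eq:concln}; letting $\ep\to0$ and then $L\to\infty$ yields $E_\ep\to0$. For $F_\ep$ I split $\{u_\ep\ge c_\ep/c\}$ into $B_{Lr_\ep}(x_\ep)$ and its complement: on the ball $c_\ep\,u_\ep^{\frac1{n-1}}=(c_\ep/u_\ep)\,u_\ep^{\frac n{n-1}}=(1+o(1))\,u_\ep^{\frac n{n-1}}$ uniformly (again by $\psi_\ep\to1$), so its contribution to $F_\ep$ tends to $m_L$; on the complement $u_\ep\ge c_\ep/c$ forces $c_\ep\,u_\ep^{\frac1{n-1}}\le c\,u_\ep^{\frac n{n-1}}$, so its contribution is at most $c$ times the integral in \eqref{eq:concln}. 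Letting $\ep\to0$ and then $L\to\infty$ gives $F_\ep\to1$. Therefore $\int_\Om|\na u_{\ep,c}|^n\,dx=\tfrac1c F_\ep+E_\ep+o(1)\to\tfrac1c$, which is the assertion.

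The step I expect to be the main obstacle is the concentration estimate \eqref{eq:concln}: one must upgrade the $C^1_{\mathrm{loc}}$ convergence of $\psi_\ep$ and $\vphi_\ep$ --- together with the \emph{exact} mass $\int_{\R^n}e^{\frac n{n-1}\beta_n\vphi}\,dz=2$ and the exact normalization $\int_\Om e^{\beta_\ep|u_\ep|^{\frac n{n-1}}}|u_\ep|^{\frac n{n-1}}\,dx=\lam_\ep$ --- into a genuine statement that no $\lam_\ep$-mass escapes to scales much larger than $r_\ep$, and carry out carefully the bookkeeping that transports integrals over $\Om$ near the boundary blow-up point $p$ into the half-ball chart, comparing $u_\ep$ with its reflection $\tilde u_\ep$ and absorbing both the metric factor $\sqrt{\det g}$ and the shift ${y_\ep}_1/r_\ep$.
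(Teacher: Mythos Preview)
Your argument is correct. The paper does not give its own proof of this lemma --- it simply says ``The proof is completely analogous to \cite{Li2005}, so we omit it'' --- and what you have written is precisely the standard argument from Li~\cite{Li2005}: test the Euler--Lagrange equation against the truncation $u_{\ep,c}$, discard the lower-order terms using $u_\ep\to0$ in every $L^q$ together with \eqref{eq:upbound}, and identify the remaining right-hand side via the blow-up analysis (Lemmas~\ref{convergepsiep} and~\ref{convergevphiep}, the value $\int_{\R^n}e^{\frac n{n-1}\beta_n\vphi}=2$, and ${y_\ep}_1/r_\ep\to0$) so that all but a $1/c$-fraction of the normalized mass is captured on $\{u_\ep\ge c_\ep/c\}$. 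Your bookkeeping for the chart, the metric factor, and the boundary shift is exactly what the preceding section of the paper already sets up, so the concentration estimate \eqref{eq:concln} is justified. In short, you have supplied the details the paper omits, along the same lines as the reference it cites.
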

\begin{proof}
The proof is completely analogous to \cite{Li2005}, so we omit it.
\end{proof}
\begin{lemma}\label{chantrensup}
It holds
\[
\sup_{u\in \mathcal H, \|u\|_{1,\al}\leq 1} \int_\Om e^{\beta_n |u|^{\frac{n}{n-1}}} dx \leq |\Om| + \limsup_{\ep \to 0} \frac{\lam_\ep}{c_\ep^{\frac n{n-1}}}.
\]
\end{lemma}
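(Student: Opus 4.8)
I would work under the standing assumption $c_\ep\to\infty$ of \S3 (the bounded case having already been disposed of), and, using \eqref{eq:limitCep} to identify the left--hand side of the assertion with $\lim_{\ep\to0}C_\ep$, I would reduce matters to showing
\[
\limsup_{\ep\to0}C_\ep\le|\Om|+\limsup_{\ep\to0}\frac{\lam_\ep}{c_\ep^{\frac n{n-1}}},
\]
the right--hand side being finite because $\lam_\ep\le c_\ep^{\frac n{n-1}}C_\ep$ and $C_\ep$ converges. The plan is then to fix $c>1$, split $C_\ep=\int_\Om e^{\beta_\ep|u_\ep|^{\frac n{n-1}}}dx$ according to whether $u_\ep\ge c_\ep/c$ or $u_\ep<c_\ep/c$, estimate the two pieces separately, and finally let $c\to1^+$. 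The first piece is immediate: on $\{u_\ep\ge c_\ep/c\}$ one has $u_\ep>0$ and $1\le(c/c_\ep)^{\frac n{n-1}}u_\ep^{\frac n{n-1}}$, hence $e^{\beta_\ep u_\ep^{\frac n{n-1}}}\le(c/c_\ep)^{\frac n{n-1}}u_\ep^{\frac n{n-1}}e^{\beta_\ep u_\ep^{\frac n{n-1}}}$, and integrating while recalling the definition of $\lam_\ep$ bounds $\int_{\{u_\ep\ge c_\ep/c\}}e^{\beta_\ep|u_\ep|^{\frac n{n-1}}}dx$ by $c^{\frac n{n-1}}\lam_\ep/c_\ep^{\frac n{n-1}}$.

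The heart of the proof is the second piece. On $\{u_\ep<c_\ep/c\}$ the function $u_\ep$ agrees with the truncation $u_{\ep,c}=\min\{u_\ep,c_\ep/c\}$ of Lemma~\ref{truncation}, so $\int_{\{u_\ep<c_\ep/c\}}e^{\beta_\ep|u_\ep|^{\frac n{n-1}}}dx\le\int_\Om e^{\beta_\ep|u_{\ep,c}|^{\frac n{n-1}}}dx$, and I claim this last integral tends to $|\Om|$. Setting $m_\ep=\frac1{|\Om|}\int_\Om u_{\ep,c}\,dx$ and $v_{\ep,c}=u_{\ep,c}-m_\ep\in\mathcal H$, one has $m_\ep\to0$ (because $\|u_\ep\|_1\to0$ and $\tfrac{c_\ep}{c}|\{u_\ep\ge c_\ep/c\}|\le\|u_\ep\|_1$), $v_{\ep,c}\to0$ a.e.\ (because $u_\ep\to0$ a.e.\ and $c_\ep\to\infty$), and $\|\na v_{\ep,c}\|_n^n=\|\na u_{\ep,c}\|_n^n\to1/c<1$ by Lemma~\ref{truncation}. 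Fixing $t>0$ with $(1+t)^{n-1}<c$ and using the elementary inequality $|a+b|^{\frac n{n-1}}\le(1+t)|a|^{\frac n{n-1}}+C_t|b|^{\frac n{n-1}}$ (as in the earlier parts of \S3) to split off the vanishing mean $m_\ep$, one reduces to $e^{o(1)}\int_\Om e^{\theta_\ep|w_{\ep,c}|^{\frac n{n-1}}}dx$, where $w_{\ep,c}=v_{\ep,c}/\|\na v_{\ep,c}\|_n$ has unit Dirichlet norm and mean zero and $\theta_\ep=(1+t)\beta_\ep\|\na v_{\ep,c}\|_n^{\frac n{n-1}}\to(1+t)\beta_n c^{-1/(n-1)}<\beta_n$. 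Choosing $\eta>0$ with $(1+\eta)\lim_\ep\theta_\ep<\beta_n$, one gets $(1+\eta)\theta_\ep<\beta_n$ for $\ep$ small, hence $(e^{\theta_\ep|w_{\ep,c}|^{\frac n{n-1}}})^{1+\eta}\le e^{\beta_n|w_{\ep,c}|^{\frac n{n-1}}}$, and Cianchi's inequality \eqref{eq:Cianchi} applied to $w_{\ep,c}$ shows that $\{e^{\theta_\ep|w_{\ep,c}|^{\frac n{n-1}}}\}_\ep$ is bounded in $L^{1+\eta}(\Om)$, hence uniformly integrable. Together with $w_{\ep,c}\to0$ a.e., Vitali's convergence theorem then gives $\int_\Om e^{\theta_\ep|w_{\ep,c}|^{\frac n{n-1}}}dx\to|\Om|$. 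Combining the two pieces, $\limsup_\ep C_\ep\le|\Om|+c^{\frac n{n-1}}\limsup_\ep\lam_\ep/c_\ep^{\frac n{n-1}}$ for every $c>1$, and letting $c\to1^+$ yields the statement.

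The step I expect to be the main obstacle is precisely the convergence $\int_\Om e^{\beta_\ep|u_{\ep,c}|^{\frac n{n-1}}}dx\to|\Om|$: the truncated functions are not uniformly bounded — they equal $c_\ep/c\to\infty$ on a shrinking neighbourhood of the blow-up point $p$ — so one cannot pass to the limit by plain dominated convergence. What makes it work is that Lemma~\ref{truncation} keeps the \emph{normalized} Dirichlet energy of $u_{\ep,c}$ strictly below $1$; that margin is what turns the bare uniform bound furnished by \eqref{eq:Cianchi} into a uniform $L^{1+\eta}$ bound, and uniform integrability then absorbs the contribution of the ``spike'', which lives on a set of measure $o(1)$. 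A secondary, routine point to check is that the one--sided truncation $u_{\ep,c}$ does not absorb extra Dirichlet energy from a hypothetical negative concentration of $u_\ep$; this does not happen since $|\na u_\ep|^n\rightharpoonup\de_p$ is a single Dirac mass at a point where $u_\ep\to+\infty$, and if preferred one may instead use the two--sided truncation $\mathrm{med}\{-c_\ep/c,u_\ep,c_\ep/c\}$, whose gradient energy likewise tends to $1/c$.
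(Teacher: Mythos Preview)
Your proof is correct and follows essentially the same route as the paper: fix $c>1$, split $C_\ep$ according to $\{u_\ep\ge c_\ep/c\}$ versus its complement, bound the first piece by $c^{\frac n{n-1}}\lam_\ep/c_\ep^{\frac n{n-1}}$, show the second piece tends to $|\Om|$ via Lemma~\ref{truncation} and Cianchi's inequality, then let $c\to1$. The only difference is one of detail: the paper dispatches the convergence $\int_\Om e^{\beta_\ep|u_{\ep,c}|^{\frac n{n-1}}}dx\to|\Om|$ in two lines (``By Cianchi's inequality $\ldots$ bounded in $L^q$ for some $q>1$; since $u_{\ep,c}\to0$ a.e.\ $\ldots$''), whereas you carefully subtract the mean, split via the elementary power inequality, and invoke Vitali --- exactly the maneuver the paper itself uses earlier in \S3 when treating $\phi_\ep$, so your elaboration is appropriate and the arguments are substantively identical.
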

\begin{proof}
Fix $c >1$ and define $u_{\ep,c}$ as above. Lemma \ref{truncation} implies
\[
\lim_{\ep\to 0} \|\na u_{\ep,c}\|_n^n = \frac 1c < 1.
\]
By Cianchi's inequality \eqref{eq:Cianchi}, $e^{\beta_\ep |u_{\ep,c}|^{\frac n{n-1}}}$ is bounded in $L^q(\Om)$ for some $q >1$ as $\ep$ small enough. Since $u_{\ep,c} \to 0$ a.e. in $\Om$, then
\[
\lim_{\ep\to 0} \int_{\Om} e^{\beta_\ep |u_{\ep,c}|^{\frac n{n-1}}} dx = |\Om|.
\]
We have
\begin{align*}
\int_{\Om} e^{\beta_\ep |u_{\ep}|^{\frac n{n-1}}} dx &= \int_{\{u_\ep \leq c_\ep/c\}}e^{\beta_\ep |u_{\ep}|^{\frac n{n-1}}} dx + \int_{\{u_\ep > c_\ep/c\}}e^{\beta_\ep |u_{\ep}|^{\frac n{n-1}}} dx\\
&\leq \int_{\Om} e^{\beta_\ep |u_{\ep,c}|^{\frac n{n-1}}} dx + c^{\frac n{n-1}} \frac{\lam_\ep}{c_\ep^{\frac{n}{n-1}}}.
\end{align*}
Let $\ep \to 0$, $c\to 1$ and using \eqref{eq:limitCep} we obtain the desired result.
\end{proof}
As an easy consequence of Lemma \ref{chantrensup} we have $\lim_{\ep\to 0} c_\ep/\lam_\ep = 0$. Indeed, if this is not the case, then we obtain from Lemma \ref{chantrensup} that
\[
\sup_{u\in \mathcal H, \|u\|_{1,\al}\leq 1} \int_\Om e^{\beta_n |u|^{\frac{n}{n-1}}} dx \leq |\Om|
\]
which is impossible. Also, we have $c_\ep^{\frac n{n-1}}/\lam_\ep$ is bounded.

We continue by studying the asymptotic behavior of $u_\ep$ away from the blow up point $p$. We have the following result
\begin{lemma}\label{boundedinSobolev}
$c_\ep^{\frac1{n-1}} u_\ep$ is bounded in $H^{1,q}(\Om)$ for any $1< q <n$, and $c_\ep^{\frac1{n-1}} u_\ep \rightharpoonup G$ weakly in $W^{1,q}(\Om)$ for any $1< q <n$, where $G$ is a Green function satisfying
\begin{equation}\label{eq:Green}
\begin{cases}
-\De_n G = \de_p + \alpha\lt(|G|^{n-2}G - \frac1{|\Om|}\int_\Om |G|^{n-2}Gdx\rt) -\frac1{|\Om|}&\mbox{in $\o{\Om}$,}\\
\pa_\nu G = 0&\mbox{on $\pa \Om \setminus\{p\}$,}\\
\int_\Om G dx =0.
\end{cases}
\end{equation}
Furthermore, $c_\ep^{\frac 1{n-1}} u_\ep\to G$ in $C^1(\o{\Om'})$ for any $\Om'\subset\subset \o{\Om}\setminus\{p\}$, and $G$ has form
\begin{equation}\label{eq:Gform}
G(x) = -\frac n{\beta_n}\ln |x-p| + A_p + \beta(x),
\end{equation}
where $A_p$ is constant, and $\beta \in C^0(\o{\Om}) \cap C^1(\o{\Om}\setminus\{p\})$ and $\beta(x) = O(|x-p|)$ as $x\to p$.
\end{lemma}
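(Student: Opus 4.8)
\textbf{Proof strategy for Lemma \ref{boundedinSobolev}.}

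The plan is to follow the standard blow-up scheme (as in \cite{Li2005, Yang07, LYzero}), adapted to the Neumann setting where the singularity sits on $\pa\Om$. First I would establish the uniform $W^{1,q}$ bound for $c_\ep^{1/(n-1)} u_\ep$ with $1<q<n$. Testing the Euler--Lagrange equation \eqref{eq:EL} against a suitable function and using the now-standard trick of estimating $\int_\Om |\na u_\ep|^{q}$ on level sets, one reduces matters to controlling $\int_\Om \frac{1}{\lam_\ep} e^{\beta_\ep|u_\ep|^{n/(n-1)}} |u_\ep|^{1/(n-1)} dx$; by the definition of $\lam_\ep$ this integral is $O(1)$, and combined with $c_\ep/\lam_\ep \to 0$ (established just before the lemma) and \eqref{eq:upbound} one gets $c_\ep^{1/(n-1)} \| \De_n u_\ep\|_{L^1} = O(1)$ in the distributional sense. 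The $L^q$ gradient estimate for $1<q<n$ then follows from the $W^{1,q}$ a priori bound for $n$-Laplace equations with $L^1$ (measure) data --- this is the $n$-dimensional analogue of the Stampacchia-type estimate used by Yang. Weak compactness in $W^{1,q}$ gives a limit which I call $G$.

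Next I would identify the limiting equation. Passing to the limit in \eqref{eq:EL} multiplied by $c_\ep^{1/(n-1)}$: the term $\frac{1}{\lam_\ep}e^{\beta_\ep|u_\ep|^{n/(n-1)}}|u_\ep|^{(2-n)/(n-1)} u_\ep \cdot c_\ep^{1/(n-1)}$ concentrates at $p$ with total mass $1$ --- this is where Lemmas \ref{convergepsiep} and \ref{convergevphiep} enter, giving $\frac{c_\ep^{1/(n-1)}}{\lam_\ep}\int_{B_{Rr_\ep}(y_\ep)} e^{\beta_\ep|u_\ep|^{n/(n-1)}}|u_\ep|^{(2-n)/(n-1)}u_\ep\, dx \to 1$ as first $\ep\to 0$ then $R\to\infty$ (using $\int_{\R^n}e^{\frac{n}{n-1}\beta_n\vphi}=2$ and the reflection that doubles the half-space mass, so the ``true'' mass near $p$ in $\Om$ is $1$); the remaining bulk contributes nothing in the limit because $u_\ep\to 0$ in every $L^s$. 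The terms $\al |u_\ep|^{n-2}u_\ep$ and the constants pass to the limit directly since $c_\ep^{1/(n-1)}u_\ep \to G$ strongly in $L^s$ off $p$ and the constants $\mu_\ep/\lam_\ep, \nu_\ep$ are bounded; note $\mu_\ep/\lam_\ep \to 0$ because $c_\ep/\lam_\ep\to 0$. This yields \eqref{eq:Green} in the distributional sense, with the Neumann condition following from $\pa_\nu u_\ep=0$ on $\pa\Om$. The $C^1_{\rm loc}(\o\Om\setminus\{p\})$ convergence comes from applying interior and boundary elliptic estimates \cite{Tolksdorf, Lieberman} to \eqref{eq:EL} on compact sets away from $p$, where $e^{\beta_\ep|u_\ep|^{n/(n-1)}}$ stays bounded (by the Lions-type Lemma \ref{Lions} combined with \eqref{eq:Dirac}, which localizes all the $W^{1,n}$ mass at $p$, so away from $p$ the exponential is $L^r$-bounded).

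Finally I would extract the logarithmic profile \eqref{eq:Gform}. Since $G$ is $n$-harmonic near $p$ up to the bounded terms $\al(|G|^{n-2}G - \text{avg}) - 1/|\Om|$, and since it has a point singularity at a boundary point with a Neumann (reflection) condition, one reflects across $\pa\Om$ in the normal coordinates $(V,\phi)$ already fixed before Lemma \ref{convergepsiep}: the reflected Green function has an interior isolated singularity of $n$-Laplace type with mass $2$ over the full ball (mass $1$ in each half), which by the known local structure of $n$-harmonic functions with isolated singularities (Serrin \cite{Serrin64}, Kichenassamy--V\'eron) behaves like $-\frac{n}{\beta_n}\ln|x-p| + $ (bounded). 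A finer expansion --- showing the remainder $\beta$ is $C^0$ up to $p$ with $\beta(x)=O(|x-p|)$ --- follows by writing $G + \frac{n}{\beta_n}\ln|x-p|$ as a solution of an $n$-Laplace-type equation with right-hand side in $L^s$ for $s>n$ near $p$ (the constant $\frac{n}{\beta_n}$ is exactly chosen so the $\de_p$ cancels) and invoking $C^{1,\gamma}$ regularity. The main obstacle I anticipate is the boundary blow-up: the elliptic estimates and the Liouville/classification arguments must all be run in the reflected picture, and one must check carefully that the reflection is compatible with the quasilinear operator $\De_n$ in the curved metric $g$ --- this is exactly why the normal coordinates with $g_{ij}(0)=\de_{ij}$, $\pa_l g_{ij}(0)=0$ were set up, and why the factor $2$ (hence the constant $\beta_n = n(\om_{n-1}/2)^{1/(n-1)}$ rather than $\al_n$) propagates through the whole argument.
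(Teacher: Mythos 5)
Your outline reproduces the broad architecture of the paper's proof: pass to $w_\ep = c_\ep^{1/(n-1)}u_\ep$, show $-\De_n w_\ep$ has $L^1$ data, invoke the Brezis--Merle/Struwe/Li $W^{1,q}$ estimate for $1<q<n$, identify the weak limit $G$ by the concentration of the exponential density at $p$ with total mass $1$ (the reflection factor $2$ being absorbed), and then reflect near $p$ to deduce the logarithmic profile via Kichenassamy--V\'eron/Serrin. That much matches the paper, as does the observation that $c_\ep\mu_\ep/\lam_\ep \to 1/|\Om|$ produces the constant $-1/|\Om|$ in \eqref{eq:Green}.

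However there is a genuine gap: you never address the term $\al\bigl(|w_\ep|^{n-2}w_\ep - \tfrac{1}{|\Om|}\int_\Om |w_\ep|^{n-2}w_\ep\,dx\bigr)$ on the right-hand side of the rescaled equation for $w_\ep$. This is precisely the new difficulty created by $\al>0$, and it is \emph{not} controlled by your reasoning: its $L^1$-norm is comparable to $\|w_\ep\|_{n-1}^{n-1}$, which is not a priori bounded (saying ``$u_\ep\to 0$ in every $L^s$'' controls $u_\ep$, not $w_\ep = c_\ep^{1/(n-1)}u_\ep$). Without a uniform $L^{n-1}$-bound on $w_\ep$, the right-hand side of the $w_\ep$-equation is not known to be bounded in $L^1(\Om)$ and the Stampacchia/Brezis--Merle gradient estimate cannot be applied. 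The paper fills this gap by a separate contradiction argument: assuming $\|w_\ep\|_{n-1}\to\infty$, normalizing $v_\ep = w_\ep/\|w_\ep\|_{n-1}$, extracting a nontrivial weak limit $v\in\mathcal H$, and testing its limiting equation with $v$ to obtain $\|\na v\|_n^n = \al\|v\|_n^n$, which contradicts $\al<\lam_1(\Om)$. This use of the spectral gap $\al<\lam_1(\Om)$ is the essential new ingredient beyond Yang's $\al=0$ proof, and your proposal omits it entirely. (Minor additional issues: the homogeneity of $\De_n$ gives $\De_n w_\ep = c_\ep \De_n u_\ep$, so the relevant $L^1$ bound is on $c_\ep\De_n u_\ep$, not $c_\ep^{1/(n-1)}\De_n u_\ep$; and the weight in the concentration claim is $c_\ep/\lam_\ep$, not $c_\ep^{1/(n-1)}/\lam_\ep$. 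The bound on the exponential term itself also requires the boundedness of $c_\ep^{n/(n-1)}/\lam_\ep$ from Lemma \ref{chantrensup} plus the truncation Lemma \ref{truncation}, not merely the definition of $\lam_\ep$.)
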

\begin{proof}
We first claim that
\begin{equation}\label{eq:claim}
\frac{c_\ep}{\lam_\ep} |u_\ep|^{\frac{2-n}{n-1}} u_\ep e^{\beta_\ep |u_\ep|^{\frac n{n-1}}} \rightharpoonup  \de_p,
\end{equation}
weakly. Indeed, fix a $c >1$ and $R >0$, we divide $\Om$ into three parts as follows
\[
\Om_1 =\{u_\ep > c_\ep/c\}\setminus \phi^{-1}(B_{Rr_\ep}(y_\ep)),\, \Om_2 = \{u_\ep \leq c_\ep/c\},\, \Om_3 = \Om \cap \phi^{-1}(B_{Rr_\ep}(y_\ep)),
\] 
where $(V,\phi)$ denotes the coordinate system around $p$ above. By Lemma \ref{convergepsiep}, we get $\phi^{-1}(B_{Rr_\ep}(y_\ep))\cap \Om \subset \{u_\ep > c_\ep/c\}$ for $\ep$ small enough. For any $\psi \in C^1(\o{\Om})$ we have
\begin{align*}
\Bigg|\int_{\Om_1} \frac{c_\ep}{\lam_\ep} |u_\ep|^{\frac{2-n}{n-1}}& u_\ep e^{\beta_\ep |u_\ep|^{\frac n{n-1}}} \psi dx\Bigg| \leq \sup_{\Om}|\psi| \int_{\Om_1} \frac{c_\ep}{\lam_\ep} u_\ep^{\frac{1}{n-1}} e^{\beta_\ep |u_\ep|^{\frac n{n-1}}} dx \\
&=\sup_{\Om} |\psi| \lt(\int_{\{u_\ep > \frac {c_\ep}c\}}\frac{c_\ep}{\lam_\ep} u_\ep^{\frac{1}{n-1}} e^{\beta_\ep |u_\ep|^{\frac n{n-1}}} dx -\int_{\Om_3} \frac{c_\ep}{\lam_\ep} u_\ep^{\frac{1}{n-1}} e^{\beta_\ep |u_\ep|^{\frac n{n-1}}} dx\rt)\\
&\leq \sup_{\Om} |\psi| \lt(c -\int_{B_{Rr_\ep}(y_\ep)\cap\{y: y_1 >0\}}\frac{c_\ep}{\lam_\ep} \tilde u_\ep^{\frac{1}{n-1}} e^{\beta_\ep |\tilde u_\ep|^{\frac n{n-1}}}\sqrt{\text{\rm det}(g)} dy\rt)\\
&\leq\sup_{\Om} |\psi| \lt(c -\int_{B_{R}(0)\cap\{y: y_1 >0\}}\psi_\ep^{\frac 1{n-1}} e^{\beta_\ep (|\tilde u_\ep|^{\frac n{n-1}} -c_\ep^{\frac{n}{n-1}})}\sqrt{\text{\rm det}(g)(y_\ep +r_\ep y)} dy\rt)\\
&=\sup_{\Om} |\psi| \lt(c -\int_{B_{R}(0)\cap\{y: y_1 \geq 0\}}e^{\frac n{n-1}\beta_n \vphi} dy + o_{\ep,R}(1)\rt),
\end{align*}
here we use Lemmas \ref{convergepsiep} and \ref{convergevphiep} and the fact $g(y_\ep + r_\ep y) \to (\de_{ij})_{n\times n}$ uniformly in $B_R(0)$. Thus
\begin{equation}\label{eq:Om1}
\int_{\Om_1} \frac{c_\ep}{\lam_\ep} |u_\ep|^{\frac{2-n}{n-1}} u_\ep e^{\beta_\ep |u_\ep|^{\frac n{n-1}}} \psi dx = O(c-1)+ o_\ep(1) + o_{R}(1).
\end{equation}
On $\Om_2$ we have
\[
\lt|\int_{\Om_2} \frac{c_\ep}{\lam_\ep} |u_\ep|^{\frac{2-n}{n-1}} u_\ep e^{\beta_\ep |u_\ep|^{\frac n{n-1}}} \psi dx\rt| \leq \sup_{\Om}|\psi| \frac{c_\ep}{\lam_\ep} \int_{\Om} |u_{\ep,c}|^{\frac1{n-1}} e^{\beta_\ep |u_{\ep,c}|^{\frac n{n-1}}} dx
\]
The integral is bounded uniformly in $\ep$ by Lemma \ref{truncation}. This together the remark after Lemma \ref{chantrensup} implies
\begin{equation}\label{eq:Om2}
\int_{\Om_2} \frac{c_\ep}{\lam_\ep} |u_\ep|^{\frac{2-n}{n-1}} u_\ep e^{\beta_\ep |u_\ep|^{\frac n{n-1}}} \psi dx = o_{\ep,c}(1).
\end{equation}
On $\Om_3$ we have
\begin{align*}
\int_{\Om_3} \frac{c_\ep}{\lam_\ep} |u_\ep|^{\frac{2-n}{n-1}} u_\ep e^{\beta_\ep |u_\ep|^{\frac n{n-1}}} \psi dx&= \int_{B_{Rr_\ep}(y_\ep)\cap\{y:y_1>0\}}\frac{c_\ep}{\lam_\ep} \tilde u_\ep^{\frac1{n-1}}  e^{\beta_\ep \tilde u_\ep^{\frac n{n-1}}} \psi\circ \phi^{-1} \sqrt{\text{\rm det}(g)} dy\\
&=(\psi(p) +o_{\ep,R}(1))\int_{B_R(0)\cap \{y: y_1 > \frac{{y_\ep}_1}{r_\ep}\}} \psi_\ep^{\frac1{n-1}}e^{\beta_\ep (|\tilde u_\ep|^{\frac n{n-1}} -c_\ep^{\frac{n}{n-1}})} dy\\
&=(\psi(p) +o_{\ep,R}(1))\lt(\int_{B_R(0)\cap \{y: y_1 \geq 0\}} e^{\frac n{n-1} \beta_n \vphi} dy + o_{\ep,R}(1)\rt),
\end{align*}
here we use Lemmas \ref{convergepsiep} and \ref{convergevphiep} and the facts ${y_\ep}_1/r_\ep \to 0$, and $g(y_\ep + r_\ep y) \to (\de_{ij})_{n\times n}$ uniformly in $B_R(0)$. Thus
\begin{equation}\label{eq:Om3}
\int_{\Om_3} \frac{c_\ep}{\lam_\ep} |u_\ep|^{\frac{2-n}{n-1}} u_\ep e^{\beta_\ep |u_\ep|^{\frac n{n-1}}} \psi dx = \psi(p) + o_{\ep,R}(1).
\end{equation}
Combining \eqref{eq:Om1}, \eqref{eq:Om2} and \eqref{eq:Om3} proves our claim \eqref{eq:claim}. 

Taking $\psi \equiv 1$, we obtain
\begin{equation}\label{eq:gioihanhang}
\lim_{\ep\to 0} \frac{c_\ep \mu_\ep}{\lam_\ep} = \frac1{|\Om|}.
\end{equation}

%We next prove that $\frac{c_\ep}{\lam_\ep} |u_\ep|^{\frac{2-n}{n-1}} u_\ep e^{\beta_\ep |u_\ep|^{\frac n{n-1}}}$ is bounded in $L^1(\Om)$. Indeed, fix
Fix a $c >1$, we have
\begin{align*}
\int_\Om \frac{c_\ep}{\lam_\ep} |u_\ep|^{\frac{1}{n-1}}  e^{\beta_\ep |u_\ep|^{\frac n{n-1}}} dx &= \int_{\{u_\ep \leq c_\ep/c\}} \frac{c_\ep}{\lam_\ep} |u_\ep|^{\frac{1}{n-1}}  e^{\beta_\ep |u_\ep|^{\frac n{n-1}}} dx + \int_{\{u_\ep > c_\ep/c\}} \frac{c_\ep}{\lam_\ep} |u_\ep|^{\frac{1}{n-1}}  e^{\beta_\ep |u_\ep|^{\frac n{n-1}}} dx\\
&\leq \frac{1}{c^{\frac1{n-1}}} \frac{c_\ep^{\frac n{n-1}}}{\lam_\ep} \int_\Om e^{\beta_\ep |u_{\ep,c}|^{\frac n{n-1}}} + c.
\end{align*} 
The remark after Lemma \ref{chantrensup} says that $\frac{c_\ep^{\frac n{n-1}}}{\lam_\ep}$ is bounded. This together with Lemma \ref{truncation} and Cianchi's inequality \eqref{eq:Cianchi} implies 
\begin{equation}\label{eq:L1bounded}
\limsup_{\ep \to 0} \int_\Om \frac{c_\ep}{\lam_\ep} |u_\ep|^{\frac{1}{n-1}}  e^{\beta_\ep |u_\ep|^{\frac n{n-1}}} dx \leq c + \frac{|\Om|}{c^{\frac1{n-1}}} \limsup_{\ep\to 0} \frac{c_\ep^{\frac n{n-1}}}{\lam_\ep} < \infty,
\end{equation}
%which derives the boundedness of $\frac{c_\ep}{\lam_\ep} |u_\ep|^{\frac{2-n}{n-1}} u_\ep e^{\beta_\ep |u_\ep|^{\frac n{n-1}}}$ in $L^1(\Om)$.

Denote $w_\ep =c_\ep^{\frac1{n-1}} u_\ep$, from \eqref{eq:EL}, we have
\begin{equation}\label{eq:wep}
\begin{cases}
-\De_n w_\ep -\al\lt[|w_\ep|^{n-2} w_\ep -\frac{\int_{\Om} |w_\ep|^{n-2} w_\ep dx}{|\Om|}\rt] =\frac{c_\ep}{\lam_\ep} |u_\ep|^{\frac{2-n}{n-1}} u_\ep e^{\beta_\ep |u_\ep|^{\frac n{n-1}}} -\frac{c_\ep \mu_\ep}{\lam_\ep}&\mbox{in $\Om$,}\\
\pa_\nu w_\ep =0 &\mbox{on $\pa \Om$.}
\end{cases}
\end{equation}
We would like to show that $w_\ep$ is bounded in $H^{1,q}(\Om)$ for any $1< q < n$. Remark that 
\[
f_\ep:=\frac{c_\ep}{\lam_\ep} |u_\ep|^{\frac{2-n}{n-1}} u_\ep e^{\beta_\ep |u_\ep|^{\frac n{n-1}}} -\frac{c_\ep \mu_\ep}{\lam_\ep}
\]
is bounded in $L^1(\Om)$ by \eqref{eq:gioihanhang} and \eqref{eq:L1bounded}. We recall the following phenomena which was first discovered by Brezis and Merle \cite{BL}, developed by Struwe \cite{Struwe} and generalized on Riemannian manifolds by Li \cite{Li2005}: \emph{If $u\in W^{1,n}(\Om)$ be a weak solution of $-\De_n u = f$, $\int_\Om u dx =0$ then for any $1< q < n$ there exists $C(q)$ such that $\|\na w\|_q \leq C(q) \|f\|_1^{\frac1{n-1}}.$}

We will apply this observation to \eqref{eq:wep}. We argue as in \cite{Yang07corrige}. We first show that $w_\ep$ is bounded in $L^{n-1}(\Om)$. Indeed, if this is not the case, then $\|w_\ep\|_{n-1} \to \infty$. Define $v_\ep = w_\ep/\|w_\ep\|_{n-1}$, then $v_\ep$ satisfies
\[
\begin{cases}
-\De_n v_\ep = \alpha\lt(|v_\ep|^{n-2} v_\ep -\frac1{|\Om|} \int_\Om |v_\ep|^{n-2} v_\ep dx\rt) + \frac{f_\ep}{\|w_\ep\|_{n-1}^{n-1}} =: g_\ep&\mbox{in $\Om$,}\\
\pa_\nu v_\ep = 0&\mbox{on $\pa \Om$.}
\end{cases}
\]
Since $\|v_\ep\|_{n-1} =1$ and $f_\ep$ is bounded in $L^1(\Om)$ then so is $g_\ep$. Obviously $\int_\Om v_\ep dx =0$ by \eqref{eq:EL}. The observation above shows that $\|\na v_\ep\|_q$ is bounded for any $1 < q < n$. The mean value of $v_\ep$ is zero, by Poincar\'e inequality, $v_\ep$ is bounded in $W^{1,q}(\Om)$ for any $1< q< n$. Hence $v_\ep \rightharpoonup v$ weakly in $W^{1,q}(\Om)$ for any $1< q < n$, and $v_\ep \to v$ in $L^{n-1}(\Om)$. Therefore $\|v\|_{n-1} =1$ and $\int_\Om v dx =0$. It is easy to show that $v$ is weak solution of 
\[
\begin{cases}
-\De_n v = \al\lt(|v|^{n-2} v -\frac1{|\Om|} \int_\Om |v|^{n-2} v dx\rt)&\mbox{in $\Om$,}\\
\pa_\nu v =0&\mbox{on $\pa \Om$.}
\end{cases}
\]
Applying elliptic estimate to this equation, we get $v \in C^1(\o{\Om})$. Taking $v$ as a test function, we get $\|\na v\|_n^n = \al\|v\|_n^n$ (recall that $\int_\Om v dx =0$). Since $\al < \lam_1(\Om)$ then $v$ must be zero function which is impossible. Thus $w_\ep$ is bounded in $L^{n-1}(\Om)$. Consequently, $-\De_n w_\ep$ is bounded in $L^1(\Om)$ which then implies the boundedness of $c_\ep^{\frac1{n-1}} u_\ep$ in $W^{1,q}(\Om)$ for any $1<q<n$ by the observation  above of Brezis, Merle, Struwe and Li.

The rest of proof is similar with the one of Theorem $4.7$ in \cite{Li2005}.
\end{proof}

\section{Capacity estimates}
In this section, we use the capacity technique to give an upper bound of
\[
\sup_{u\in \mathcal H, \|u\|_{1,\alpha} \leq 1} \int_\Om e^{\beta_n |u|^{\frac n{n-1}}} dx,
\]
under the condition that $c_\ep\to \infty$, i.e., the blow-up occurs. We mention here that the technique of using capacity estimate applied to this kind of problems was discovered by Li \cite{Li2001} in dealing with Moser--Trudinger inequality. Our main result of this section is as follows
\begin{proposition}
Under the assumption that $c_\ep \to \infty$ as $\ep \to 0$, it holds
\begin{equation}\label{eq:suptren}
\sup_{u\in \mathcal H, \|u\|_{1,\alpha} \leq 1} \int_\Om e^{\beta_n |u|^{\frac n{n-1}}} dx \leq |\Om| + \frac{\om_{n-1}}{2n} e^{\beta_n A_p + 1 + \frac12+\cdots+\frac1{n-1}}.
\end{equation}
\end{proposition}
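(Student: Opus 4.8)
The plan is to reduce, via Lemma~\ref{chantrensup}, to an upper bound for $\limsup_{\ep\to0}\lam_\ep c_\ep^{-n/(n-1)}$, and to use the identity $\lam_\ep c_\ep^{-n/(n-1)}=r_\ep^{\,n}e^{\beta_\ep c_\ep^{n/(n-1)}}$ (immediate from the definition of $r_\ep$), so that it suffices to bound $\beta_\ep c_\ep^{n/(n-1)}+n\ln r_\ep$ from above by $\ln\frac{\om_{n-1}}{2n}+\beta_n A_p+\big(1+\frac12+\cdots+\frac1{n-1}\big)+o(1)$. To that end I would fix a large $L>0$ and a small $\delta>0$ and split $\Om$ into the rescaled half-ball $B_\ep^L:=\phi^{-1}(B_{Lr_\ep}(y_\ep))\cap\Om$ around the concentration point, the neck $N_\ep:=(B_\delta(p)\cap\Om)\setminus B_\ep^L$, and the exterior $\Om\setminus B_\delta(p)$; these are disjoint with union $\Om$ once $\ep$ is small, since $r_\ep\to0$.

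On $B_\ep^L$, Lemma~\ref{convergevphiep} and the rescaling give $\int_{B_\ep^L}|\na u_\ep|^n=c_\ep^{-n/(n-1)}(E_{\mathrm{in}}(L)+o_\ep(1))$ with $E_{\mathrm{in}}(L)=\int_{B_L(0)\cap\{y_1>0\}}|\na\vphi|^n$; on the exterior, Lemma~\ref{boundedinSobolev} ($c_\ep^{1/(n-1)}u_\ep\to G$ in $C^1$ away from $p$) gives $\int_{\Om\setminus B_\delta(p)}|\na u_\ep|^n=c_\ep^{-n/(n-1)}(E_{\mathrm{out}}(\delta)+o_\ep(1))$ with $E_{\mathrm{out}}(\delta)=\int_{\Om\setminus B_\delta(p)}|\na G|^n$, and also $\al\|u_\ep\|_n^n=c_\ep^{-n/(n-1)}(\al\|G\|_n^n+o_\ep(1))$; since $\|\na u_\ep\|_n^n=1+\al\|u_\ep\|_n^n$, subtraction yields $\int_{N_\ep}|\na u_\ep|^n=1+c_\ep^{-n/(n-1)}(\al\|G\|_n^n-E_{\mathrm{in}}(L)-E_{\mathrm{out}}(\delta)+o_\ep(1))$. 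For the boundary data, Lemma~\ref{convergevphiep} gives $\al_\ep:=\inf_{\pa B_\ep^L}u_\ep=c_\ep+c_\ep^{-1/(n-1)}(\vphi(L)+o_\ep(1))$ and \eqref{eq:Gform} gives $\be_\ep:=\sup_{\pa B_\delta(p)}u_\ep=c_\ep^{-1/(n-1)}(-\tfrac n{\beta_n}\ln\delta+A_p+O(\delta)+o_\ep(1))$, with $\be_\ep<\al_\ep$ for $\ep$ small.

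The heart is a capacity estimate on the neck: $\overline u_\ep:=\max\{\be_\ep,\min\{\al_\ep,u_\ep\}\}$ equals $\al_\ep$ on $\pa B_\ep^L$ and $\be_\ep$ on $\pa B_\delta(p)$, satisfies $|\na\overline u_\ep|\le|\na u_\ep|$, and is thus admissible for the $n$-capacity of the half-annular condenser $(\overline{B_\ep^L},\Om\setminus B_\delta(p))$. Comparing it with the explicit radial $n$-harmonic function, and controlling the metric distortion via $g(y)=\id+O(|y|^2)$ near $p$ (so the logarithm picks up only an \emph{additive} $O(\delta)$ rather than a multiplicative $(1+o(1))$ factor, which would otherwise corrupt the leading constant) together with ${y_\ep}_1/r_\ep\to0$ for the centering, gives
\[
\int_{N_\ep}|\na u_\ep|^n\,dx\;\ge\;\frac{\om_{n-1}}{2}\,\frac{(\al_\ep-\be_\ep)^n}{\big(\ln\tfrac{\delta}{Lr_\ep}+O(\delta)+o_\ep(1)\big)^{n-1}}.
\]
Solving this for $\ln\tfrac1{r_\ep}$, inserting the asymptotics of $\al_\ep-\be_\ep$ and of $\int_{N_\ep}|\na u_\ep|^n$, and Taylor-expanding each factor $(1+c_\ep^{-n/(n-1)}(\cdots))^{p}$ (legitimate since for fixed $L,\delta$ the bracket is bounded as $\ep\to0$), all $c_\ep^{n/(n-1)}$-order terms cancel by $\beta_n=n(\om_{n-1}/2)^{1/(n-1)}$, and only $L$- and $\delta$-dependent constants remain. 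Two exact computations close the argument: (i) $\int_0^S\frac{s^{n-1}}{(1+s)^n}\,ds=\ln S-\big(1+\frac12+\cdots+\frac1{n-1}\big)+o_S(1)$ (via $\sum_{j=1}^m\binom{m}{j}\frac{(-1)^{j-1}}{j}=\sum_{j=1}^m\frac1j$), applied to $E_{\mathrm{in}}(L)$ with $\vphi$ as in \eqref{eq:vphifunct}, so that the $\ln L$ produced inside the expansion cancels against the $\ln L$ from $\ln\tfrac1{r_\ep}=\ln\tfrac{\delta}{Lr_\ep}+\ln\tfrac L\delta$ and the residual constant is $\ln\frac{\om_{n-1}}{2n}$ plus the harmonic sum; and (ii) integrating $|\na G|^n$ by parts on $\Om\setminus B_\delta(p)$, using \eqref{eq:Green}, the Neumann condition, $\int_\Om G=0$ and \eqref{eq:Gform}, which gives $E_{\mathrm{out}}(\delta)=\al\|G\|_n^n-\frac n{\beta_n}\ln\delta+A_p+O(\delta|\ln\delta|^n)$, so that the $\al\|G\|_n^n$-terms cancel, the surviving $\ln\delta$-terms cancel as well, and precisely $\beta_n A_p$ remains. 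Hence $\beta_\ep c_\ep^{n/(n-1)}+n\ln r_\ep\le\ln\frac{\om_{n-1}}{2n}+\beta_n A_p+\big(1+\frac12+\cdots+\frac1{n-1}\big)+O(\delta|\ln\delta|^n)+o_L(1)+o_\ep(1)$; letting $\ep\to0$, then $L\to\infty$, then $\delta\to0$, and exponentiating, $\limsup_{\ep\to0}\lam_\ep c_\ep^{-n/(n-1)}\le\frac{\om_{n-1}}{2n}e^{\beta_n A_p+1+\frac12+\cdots+\frac1{n-1}}$, so \eqref{eq:suptren} follows from Lemma~\ref{chantrensup}.

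I expect the main obstacle to be the error control in the capacity step: one must keep every correction to the logarithmic denominator (the non-flat metric, the inner plate not being exactly centered at the origin, and the fact that $c_\ep^{1/(n-1)}u_\ep\to G$ only away from $p$) additive and of size $O(\delta)+o_\ep(1)$ rather than multiplicative, since a stray multiplicative $o_\ep(1)$, measured against the $c_\ep^{n/(n-1)}$-order main term, would contribute an $O(1)$ error and destroy the sharp constant. The rest — the bookkeeping of the several $O(c_\ep^{-n/(n-1)})$ contributions and the two identities above — is routine but lengthy.
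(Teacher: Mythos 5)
Your proposal follows essentially the same route as the paper: reduce via Lemma~\ref{chantrensup} to bounding $\limsup_{\ep\to0}\lam_\ep c_\ep^{-n/(n-1)}$, use the decomposition into inner bubble, neck and exterior, estimate the neck energy from below by $n$-capacity, feed in the bubble/Green-function asymptotics from Lemmas~\ref{convergevphiep} and~\ref{boundedinSobolev}, and let the $c_\ep^{n/(n-1)}$-order terms cancel thanks to $\beta_n=n(\om_{n-1}/2)^{1/(n-1)}$. The only technical difference is that the paper does the capacity step through the singular $n$-Laplace Green function $G_\ep$ of Kichenassamy--Veron and Li (which satisfies the Neumann condition exactly, with the minimizer over the class $\Lam_\ep(c_1,c_2,a,b)$ given in closed form by \eqref{eq:Psiform}--\eqref{eq:energy} and the geometric error absorbed into $v_\ep=O(\de)$), whereas you compare directly with the radial $n$-harmonic function on a half-annulus and push the metric and centering errors into additive $O(\de)+o_\ep(1)$ corrections by hand; both versions give the same lower bound $\frac{\om_{n-1}}{2}(\al_\ep-\be_\ep)^n/(\ln\frac{\de}{Lr_\ep}+O(\de)+o_\ep(1))^{n-1}$ and the same final constant.
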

\begin{proof}
We follow the argument in \cite{Li2005,Yang07}. Consider a coordinate system $(V,\phi)$ around $p$ as in Section \S3. We write a vector $y\in \R^n$ by $(y_1,y')$. Denote $\o{x}_\ep = \phi^{-1}(0,y_\ep')\in \pa \Om$. Let $G_\ep$ be a distributional solution of
\begin{equation}\label{eq:Gep}
\begin{cases}
-\De_n G_\ep(x) =\de_{\o{x}_\ep} &\mbox{in $\o{\Om} \cap B_\de(\o{x}_\ep)$,}\\
G_\ep = -\frac{n}{\beta_n} \ln \de &\mbox{on $\Om \cap \pa B_\de(\o{x}_\ep)$,}\\
\pa_\nu G_\ep =0&\mbox{on $\pa \Om \cap B_\de(\o{x}_\ep)$}.
\end{cases}
\end{equation}
It was shown by Kichennassamy and Veron \cite{Kichenassamy} and by Li \cite{Li2005}, using a reflection argument, that $G_\ep$ exists and has the form
\[
G_\ep(x) = -\frac{n}{\beta_n} \ln |x-\o x_\ep| + v_\ep(x),
\]
where $v_\ep = O(\de)$ uniformly with respect to $\ep$.

For $c_1 \leq c_2$ we define a space of functions $\Lam_\ep(c_1,c_2,a,b)$ by
\begin{align*}
\Lam_\ep(c_1,c_2,a,b)& =\Bigl\{u\in W^{1,n}(\{x\in \Om: c_1\leq G_\ep(x) \leq c_2\}): u\bigl|_{G_\ep =c_1} =a,\\
&\qquad\qquad\qquad\qquad\qquad\qquad\qquad \, u\bigl|_{G_\ep =c_2} =b, \,\pa_{\nu} u\bigl|_{\pa \Om} =0\Bigl\}.
\end{align*}
It was shown in \cite{Yang07} that $\inf_{\Lam_\ep(c_1,c_2,a,b)} \int_{c_1\leq G_\ep \leq c_2}|\na u|^n dx$ is attained by a function $\Psi$ having the form
%with
%\begin{equation}\label{eq:Psiequation}
%\begin{cases}
%\De_n \Psi =0\\
%\Psi\bigl|_{G_\ep =c_1} =a,\, \Psi\bigl|_{G_\ep =c_2} =b\\
%\pa_\nu \Psi\bigl|_{\pa\Om \cap B_\de(\o{x}_\ep)} =0.
%\end{cases}
%\end{equation}
%By the uniqueness of solution, $h$ has the form
\begin{equation}\label{eq:Psiform}
\Psi = \frac{b(G_\ep -c_1) -a(G_\ep -c_2)}{c_2 -c_1}
\end{equation}
and satifying
\begin{equation}\label{eq:energy}
\int_{c_1 \leq G_\ep \leq c_2} |\na \Psi|^n dx = \frac{|b-a|^n}{(c_2 -c_1)^{n-1}}.
\end{equation}
Choose $y_\ep \in \Om \cap B_\de(\o{x}_\ep)$ such that $|y_\ep -\o{x}_\ep| = Rr_\ep$. Set
\[
\mathcal S_\ep = \{x\in \Om\cap B_\de(\o{x}_\ep): G_\ep(x) = G_\ep(y_\ep)\}.
\]
If $x \in \mathcal S_\ep$ then
\[
|x-\o{x}_\ep| = |y_\ep -\o{x}_\ep| e^{\frac{\beta_n}n (v_\ep(x) -v_\ep(y_\ep))},
\]
which implies the existence of a constant $c>0$ independent of $\ep$ such that
\[
e^{-c\de} Rr_\ep \leq |x-\o{x}_\ep| \leq e^{c\de} Rr_\ep.
\]
Consequently, we get
\[
\mathcal S_\ep \subset \Om\cap(B_{e^{c\de}Rr_\ep}(\o{x}_\ep) \setminus B_{e^{-c\de}Rr_\ep}(\o{x}_\ep)).
\]
By Lemmas \ref{convergevphiep} and \ref{boundedinSobolev}, we have
\begin{equation}\label{eq:lowbound*}
\inf_{\mathcal S_\ep } u_\ep\geq b_\ep = c_\ep + \frac{\vphi(e^{c\de}R) + o_\ep(R)}{c_\ep^{\frac1{n-1}}},
\end{equation}
and
\begin{equation}\label{eq:upbound*}
\sup_{\Om \cap \pa B_\de(\o{x}_\ep)} u_\ep \leq a_\ep = \frac{\sup_{\Om \cap \pa B_\de(\o{x}_\ep)} G + o_\ep(\de)}{c_\ep^{\frac1{n-1}}},
\end{equation}
where $o_\ep(R), o_\ep(\de)\to 0$ as $\ep \to 0$ and $R,\de$ are fixed, and $G$ is Green function \eqref{eq:Green}. For $\ep$ small enough, we have $a_\ep < b_\ep$. Denote $\mathcal G_\ep =\{x\in \Om \cap B_\de(\o x_\ep) : G_\ep(x) < G_\ep(y_\ep)\}$, and set $\o u_\ep = \min\{\max\{u_\ep,a_\ep\},b_\ep\}$. From \eqref{eq:lowbound*} and \eqref{eq:upbound*}, we get $\o u_\ep \in \Lam_\ep(-\frac n{\beta_n} \ln \de, G_\ep(y_\ep),a_\ep,b_\ep)$. By \eqref{eq:energy}, we obtain
\begin{equation}\label{eq:lowubar}
\lt(\int_{\mathcal G_\ep} |\na \o u_\ep|^n dx\rt)^{\frac1{n-1}} \geq \frac{(b_\ep -a_\ep)^{\frac n{n-1}}}{G_\ep(y_\ep) +\frac n{\beta_n} \ln \de}.
\end{equation}
Notice that 
\[
B_{e^{-c\de}Rr_\ep}(\o x_\ep) \cap \Om \subset \{G_\ep >G_\ep(y_\ep)\}.
\]
Using straightforward and tedious compuations, we get
\begin{align*}
\int_{\mathcal G_\ep} |\na \o u_\ep|^n dx &\leq \int_{\mathcal G_\ep} |\na u_\ep|^n dx\\
&\leq \int_{\Om \cap B_\de(\o x_\ep)} |\na u_\ep|^n dx -\int_{B_{e^{-c\de}Rr_\ep}(\o x_\ep)\cap \Om} |\na u_\ep|^n dx \\
&= 1 + \al \|u_\ep\|_n^n  -\int_{\Om\setminus B_\de(\o x_\ep)} |\na u_\ep|^n dx -\int_{B_{e^{-c\de}Rr_\ep}(\o x_\ep)\cap \Om} |\na u_\ep|^n dx \\
&= 1 + \frac1{c_\ep^{\frac n{n-1}}} \Bigg(\alpha \|G\|_n^n -\int_{\Om\setminus B_\de(\o x_\ep)} |\na G|^n dx +o_\ep(\de) + o_\ep(1)\Bigg)\\
&\qquad\qquad -\int_{B_{e^{-c\de}Rr_\ep}(\o x_\ep)\cap \Om} |\na u_\ep|^n dx.
\end{align*}
Integration by parts and \eqref{eq:Green} give
\begin{align*}
\int_{\Om\setminus B_\de(\o x_\ep)} |\na G|^n dx &= \int_{\Om\setminus B_\de(\o x_\ep)} (-\De_n G) G dx + \int_{\pa B_\de(\o x_\ep)\cap \Om} |\na G|^{n-2} \pa_\nu G G ds\\
&=\al \|G\|_n^n -\alpha \int_{B_{\de}(\o x_\ep)\cap \Om} |G|^n dx -\frac{\al \int_\Om |G|^{n-2} Gdx +1}{|\Om|} \int_{\Om \cap B_\de(\o x_\ep)} G dx\\
&\qquad + \int_{\pa B_\de(p)\cap \Om} |\na G|^{n-2} \pa_\nu G G ds + o_\ep(\de)\\
&=\al \|G\|_n^n -\frac n{\beta_n} \ln \de + A_p + o_\ep(\de) + o_\de(1).
\end{align*}
From the choice of the coordinate system $(V,\phi)$ and the fact ${y_\ep}_1/r_\ep \to 0$, we have
\begin{align*}
\int_{B_{e^{-c\de}Rr_\ep}(\o x_\ep)\cap \Om} |\na u_\ep|^n dx &= \int_{\phi(B_{e^{-c\de}Rr_\ep}(\o x_\ep))\cap \{y:y_1>0\}} |\na_g \tilde u_\ep|_g^n \sqrt{\text{\rm det}(g)} dy\\
&=(1+o_\ep(R))\int_{B_{(1+o_\ep(R))e^{-c\de}Rr_\ep}(y_\ep)\cap \{y:y_1>0\}} |\na \tilde u_\ep|^n dy\\
&=\frac{1}{c_\ep^{\frac n{n-1}}}\lt( \int_{B_{e^{-c\de}R}(0)\cap\{y:y_1>0\}} |\na \vphi|^n dx +o_\ep(R)\rt),
\end{align*}
and
\begin{align*}
\int_{B_{e^{-c\de}R}(0)\cap\{y:y_1>0\}} |\na \vphi|^n dx&=\frac n{\beta_n} \ln R + \frac{1}{\beta_n} \ln \frac{\om_{n-1}}{2n} \\
&\qquad\qquad + \frac{n-1}{\beta_n} \sum_{k=0}^{n-2} \frac{(-1)^{n-k-1}\binom{n-1}{k}}{n-k-1} + o_\de(1) + o_R(1)\\
&=\frac n{\beta_n} \ln R + \frac{1}{\beta_n} \ln \frac{\om_{n-1}}{2n} - \frac{n-1}{\beta_n} \sum_{k=1}^{n-1} \frac1k + o_\de(1) + o_R(1)
\end{align*}
Hence
\begin{align}\label{eq:11*}
\lt(\int_{\mathcal G_\ep} |\na \o u_\ep|^n dx\rt)^{\frac1{n-1}} &\leq 1 + \frac1{(n-1)c_\ep^{\frac n{n-1}}} \Bigg(\frac n{\beta_n} \ln \frac \de R - A_p -\frac{1}{\beta_n} \ln \frac{\om_{n-1}}{2n} \notag\\
&\qquad\qquad \qquad + \frac{n-1}{\beta_n} \sum_{k=1}^{n-1} \frac1k +o_\ep(\de) + o_\ep(1) + o_\de(1) + o_R(1)\Bigg),
\end{align}
For $\ep, \de$ sufficient small and $R$ sufficient large, here we use inequality $(1-t)^a \leq 1 -at$ for $0\leq t< 1$ and $0< a< 1$. From the expression of $a_\ep, b_\ep$, we have
\begin{align}\label{eq:22*}
(b_\ep -a_\ep)^{\frac n{n-1}} 
&\geq c_\ep^{\frac n{n-1}}\Bigg[1 + \frac1{c_\ep^{\frac n{n-1}}}\Bigg(\frac n{\beta_n} \ln \frac \de R -\frac{1}{\beta_n} \ln \frac{\om_{n-1}}{2n} -A_p +o_\ep(R)+o_\ep(\de) +o_\de(1)\Bigg)\Bigg]^{\frac n{n-1}}\notag\\
&\geq c_\ep^{\frac n{n-1}} + \frac n{n-1}\lt(\frac n{\beta_n} \ln \frac \de R -\frac{1}{\beta_n} \ln \frac{\om_{n-1}}{2n} -A_p +o_\ep(R)+ o_\ep(\de) +o_\de(1)\rt),
\end{align}
when $\ep, \de$ sufficient small and $R$ sufficient large, here we use inequality $(1-t)^a \geq 1 -at$ for $0\leq t< 1$ and $a >1$. By the choice of $y_\ep$, we have
\begin{equation}\label{eq:33*}
G_\ep(y_\ep) + \frac n{\beta_n} \ln \de =\frac{n}{\beta_n} \ln \frac\de{R} -\frac1{\beta_n} \ln \frac{\lam_\ep}{c_\ep^{\frac n{n-1}}} + \frac{\beta_\ep c_\ep^{\frac n{n-1}}}{\beta_n} + o_\de(1).
\end{equation}
Gathering \eqref{eq:lowubar}, \eqref{eq:11*}, \eqref{eq:22*} and \eqref{eq:33*} together, we get
\[
(1+o_\ep(1) + o_\ep(R) + o_\ep(\de)) \frac1{\beta_n} \ln \frac{\lam_\ep}{c_\ep^{\frac n{n-1}}} \leq \frac{1}{\beta_n} \ln \frac{\om_{n-1}}{2n} + A_p + \frac1{\beta_n} \sum_{k=1}^{n-1}\frac1k + o_\de(1) + o_{R}(1).
\] 
Let $\ep \to 0$, $\de\to 0$ and $R\to \infty$, we obtain
\[
\limsup_{\ep\to 0} \frac{\lam_\ep}{c_\ep^{\frac n{n-1}}} \leq \frac{\om_{n-1}}{2n} e^{\beta_n A_p + 1 + \frac12 + \cdots+ \frac1{n-1}}.
\]
This estimate and Lemma \ref{chantrensup} prove \eqref{eq:suptren}.
\end{proof}
\section{Proof of main theorems}
\begin{proof}[Proof of Theorem \ref{Main1}]
If $c_\ep$ is bounded, by applying elliptic estimates to \eqref{eq:EL}, we see that $u_\ep \to u^*$ in $C^1(\o \Om)$ for some function $u^* \in C^1(\o\Om)$ which implies Theorems \ref{Main1}. If $c_\ep \to \infty$ then Theorem \ref{Main1} follows from \eqref{eq:suptren}.
\end{proof}

\begin{proof}[Proof of Theorem \ref{Main2}]
We will construct a sequence $\phi_\ep \in\mathcal H$ such that $\|\na \phi_\ep\|_{n,\al} =1$ and
\begin{equation}\label{eq:suff}
\int_\Om e^{\beta |\phi_\ep|^{\frac n{n-1}}} dx > |\Om| + \frac{\om_{n-1}}{2n} e^{\beta_n A_p + 1 + \frac12 + \cdots + \frac1{n-1}},
\end{equation}
for $\ep >0$ small. Consequently, $c_\ep$ is bounded. Applying elliptic estimates to \eqref{eq:EL}, we get that $u_\ep \to u^*$ in $C^1(\o \Om)$ for some function $u^* \in C^1(\o \Om)$ which proves Theorem \ref{Main2}.

Denote $r =|x-p|$, notice that $G(x,p) = -\frac n{\beta_n} \ln r + A_p + \beta(x)$ with $\beta(x) = O(|x-p|)$. For $\ep >0$, denote $R = -\ln \ep$, consider the sequences of functions given by
\[
w_\ep =
\begin{cases}
c + \frac1{c^{\frac 1{n-1}}}\lt(-\frac{n-1}{\beta_n} \ln \lt(1+ \lt(\frac{\om_{n-1}}{2n}\rt)^{\frac1{n-1}} \frac{r^{\frac n{n-1}}}{\ep^{\frac n{n-1}}}\rt) + A\rt)&\mbox{if $0 < r < R\ep$,}\\
\frac1{c^{\frac 1{n-1}}}G &\mbox{if $r \geq R\ep$,}
\end{cases}
\]
and $\phi_\ep = w_\ep -\frac1{|\Om|}\int_\Om w_\ep dx$ where $\eta$ is cut-off function in $B_{2R\ep}(p)$, $\eta \equiv 1$ in $B_{R\ep}(p)$ and $\|\na \eta\|_\infty  = O((R\ep)^{-1})$, and $c,A$ are constants determined later.

In order to get $w_\ep \in H^1(\Om)$, we choose $A$ such that
\[
c + \frac1{c^{\frac 1{n-1}}}\lt(-\frac{n-1}{\beta_n} \ln \lt(1+ \lt(\frac{\om_{n-1}}{2n}\rt)^{\frac1{n-1}} R^{\frac n{n-1}}\rt)+A\rt) = \frac1{c^{\frac 1{n-1}}}\lt(-\frac n{\beta_n} \ln (R\ep) + A_p\rt),
\]
or 
\begin{equation}\label{eq:A}
A =-c^{\frac n{n-1}} + \frac{n-1}{\beta_n} \ln \lt(1+ \lt(\frac{\om_{n-1}}{2n}\rt)^{\frac1{n-1}} R^{\frac n{n-1}}\rt) -\frac n{\beta_n} \ln (R\ep) + A_p.
\end{equation}
We next compute some quantities concerning to $w_\ep$.
\begin{lemma}\label{eq:lemma5}
It holds
\begin{align}\label{eq:ab1}
\int_\Om |\na w_\ep|^n dx &= \frac1{c^{\frac n{n-1}}} \Bigg(\alpha \|G\|_2^2 -\frac n{\beta_n}\ln \ep +  A_p  + \frac{1}{\beta_n} \ln \frac{\om_{n-1}}{2n}\notag\\
&\qquad\qquad\quad - \frac{n-1}{\beta_n}\sum_{k=1}^{n -1} \frac1k +O(R^{-\frac n{n-1}}) +O(R\ep \ln(R\ep))\Bigg),
\end{align}
\begin{equation}\label{eq:meanwep}
c^{\frac 1{n-1}}\int_\Om w_\ep dx =O(R\ep(-\ln (R\ep))),
\end{equation}
and
\begin{equation}\label{eq:Lnnorm}
c^{\frac n{n-1}}\int_{\Om} |w_\ep|^n dx =\|G\|_n^n + O((R\ep)^n(-\ln (R\ep))^n).
\end{equation}
\end{lemma}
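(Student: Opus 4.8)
The plan is to compute the three quantities in Lemma~\ref{eq:lemma5} by splitting each integral over the two regions $\{0<r<R\ep\}$ and $\{r\geq R\ep\}$, where $w_\ep$ has an explicit bubble form in the first region and equals $c^{-1/(n-1)}G$ in the second. First I would handle \eqref{eq:ab1}. On the outer region $\{r\geq R\ep\}$ one has $\na w_\ep = c^{-1/(n-1)}\na G$, so $\int_{\{r\geq R\ep\}}|\na w_\ep|^n = c^{-n/(n-1)}\int_{\Om\setminus B_{R\ep}(p)}|\na G|^n$; using integration by parts together with the Green equation \eqref{eq:Green} (exactly as in the capacity computation of Section~\S4, where $\int_{\Om\setminus B_\de}|\na G|^n = \al\|G\|_n^n - \frac n{\beta_n}\ln\de + A_p + o_\de(1)$), one gets this term equal to $c^{-n/(n-1)}(\al\|G\|_n^n - \frac n{\beta_n}\ln(R\ep) + A_p + o(1))$, where the error absorbs contributions of order $O(R\ep\ln(R\ep))$ coming from the boundary term on $\pa B_{R\ep}(p)$ and from $\beta(x)=O(|x-p|)$. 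On the inner region, $w_\ep$ is (up to the additive constant) the radial bubble $-\frac{n-1}{\beta_n}\ln(1+(\om_{n-1}/2n)^{1/(n-1)}(r/\ep)^{n/(n-1)})$, whose $n$-Dirichlet energy over $B_{R\ep}(p)\cap\Om$ I would compute by the change of variables $r = \ep s$, reducing it to $\int_{B_R(0)\cap\{y_1>0\}}|\na\vphi|^n$, which was already evaluated in Section~\S4 to be $\frac n{\beta_n}\ln R + \frac1{\beta_n}\ln\frac{\om_{n-1}}{2n} - \frac{n-1}{\beta_n}\sum_{k=1}^{n-1}\frac1k + O(R^{-n/(n-1)})$. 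Adding the two pieces, the $\ln R$ terms cancel against $-\frac n{\beta_n}\ln(R\ep) = -\frac n{\beta_n}\ln R - \frac n{\beta_n}\ln\ep$, leaving precisely \eqref{eq:ab1}.

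For \eqref{eq:meanwep}, I would note that $w_\ep$ differs from $c^{-1/(n-1)}G$ only on $B_{R\ep}(p)$, where both $w_\ep$ and $G$ are $O(-\ln(R\ep))$ pointwise (the logarithmic singularity of $G$ at $p$ is integrable, and the bubble part of $w_\ep$ is of the same size), while $\int_\Om G\,dx = 0$ by \eqref{eq:Green}. Hence $c^{1/(n-1)}\int_\Om w_\ep\,dx = \int_{B_{R\ep}(p)}(c^{1/(n-1)}w_\ep - G)\,dx + $ (the constant $c\cdot c^{1/(n-1)}|\Om|$ term must be tracked carefully, but in fact the definition of $w_\ep$ together with the matching condition \eqref{eq:A} is arranged so that the leading constant contributions vanish), and the remaining integral over a ball of radius $R\ep$ of an $O(-\ln(R\ep))$ function is $O((R\ep)^n(-\ln(R\ep)))$; since $(R\ep)^n \ll R\ep$ for small $\ep$ this is absorbed into $O(R\ep(-\ln(R\ep)))$. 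Similarly for \eqref{eq:Lnnorm}, I would write $c^{n/(n-1)}\int_\Om|w_\ep|^n\,dx = \int_\Om|c^{1/(n-1)}w_\ep|^n\,dx$; on $\{r\geq R\ep\}$ this is exactly $\int_{\Om\setminus B_{R\ep}(p)}|G|^n\,dx = \|G\|_n^n - \int_{B_{R\ep}(p)}|G|^n\,dx = \|G\|_n^n + O((R\ep)^n(-\ln(R\ep))^n)$ (using $|G|^n = O((-\ln|x-p|)^n)$ near $p$, which is integrable), and on $\{r<R\ep\}$ the integrand $|c^{1/(n-1)}w_\ep|^n$ is again $O((-\ln(R\ep))^n)$ over a ball of volume $O((R\ep)^n)$, contributing $O((R\ep)^n(-\ln(R\ep))^n)$.

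The main obstacle I anticipate is bookkeeping the additive constants and the matching condition \eqref{eq:A} so that the ``pure constant'' contributions $c + c^{-1/(n-1)}A$ etc.\ enter \eqref{eq:ab1} correctly (they drop out of the gradient, but one must be sure the Dirichlet energy splitting is consistent across the interface $r=R\ep$ where $w_\ep$ is continuous by construction) and controlling precisely which error terms are $O(R^{-n/(n-1)})$ versus $O(R\ep\ln(R\ep))$ — the former come from truncating the bubble integral at radius $R$, the latter from the discrepancy between $G$ and its singular part $-\frac n{\beta_n}\ln r + A_p$ on $B_{R\ep}(p)$ and from boundary terms on $\pa B_{R\ep}(p)$. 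A secondary technical point is that all integrals over $\Om\cap B_{R\ep}(p)$ must be carried out in the normal coordinate chart $(V,\phi)$, using $\sqrt{\det g} = 1 + O(|y|^2)$, so that the half-ball symmetrization matches the factor $2$ hidden in $\beta_n = n(\om_{n-1}/2)^{1/(n-1)}$; but since the chart distortion is $O((R\ep)^2)$ it only affects lower-order error terms already present. Once the three estimates are in hand, the proof is complete; the calculations, while lengthy, are routine and parallel those in \cite{Li2005,Yang07}.
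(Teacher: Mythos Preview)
Your proposal is correct and follows essentially the same route as the paper: split each integral at $r=R\ep$, compute the outer Dirichlet energy via integration by parts and the Green equation \eqref{eq:Green}, compute the inner bubble energy exactly as in Section~\S4, and handle \eqref{eq:meanwep}--\eqref{eq:Lnnorm} using $\int_\Om G\,dx=0$ together with the pointwise bound $|G|=O(-\ln r)$ near $p$.

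Two minor remarks. First, the bookkeeping worry you flag for \eqref{eq:meanwep} is handled in the paper by substituting the matching relation \eqref{eq:A} directly into $c^{1/(n-1)}w_\ep$ on $\{r<R\ep\}$; this eliminates the large constant $c^{n/(n-1)}$ and leaves an expression that is manifestly $O(-\ln(R\ep))$ pointwise, so no delicate cancellation is needed. Second, the normal coordinate chart $(V,\phi)$ is not invoked in this lemma: the test function $w_\ep$ is built in the ambient Euclidean coordinates of $\Om$, and the half-space factor $1/2$ enters simply because $|\Om\cap B_r(p)| = \frac{\om_{n-1}}{2n}r^n(1+O(r))$ for $p\in\pa\Om$, which already absorbs into the stated error terms.
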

\begin{proof}
We first compute $\int_\Om |\na w_\ep|^n dx$ by splitting it as $\int_{\Om\cap B_{R\ep}(p)} + \int_{\Om \setminus B_{R\ep}(p)}$. A straightforward compuation shows that
\begin{align}\label{eq:aqw}
\int_{\Om \cap B_{R\ep}(p)} |\na w_\ep|^n dx & =\frac1{ c^{\frac n{n-1}}} \lt(\frac n{\beta_n}\rt)^n \lt(\frac{\om_{n-1}}{2n}\rt)^{\frac n{n-1}}\int_{\Om \cap B_{R\ep}(p)}\lt|\frac{\ep^{-\frac n{n-1}} r^{\frac1{n-1}}}{1+ \lt(\frac{\om_{n-1}}{2n}\rt)^{\frac1{n-1}} \frac{r^{\frac n{n-1}}}{\ep^{\frac n{n-1}}}}\rt|^n dx \notag\\
&= \frac1{ c^{\frac n{n-1}}} \lt(\frac n{\beta_n} \ln R + \frac{1}{\beta_n} \ln \frac{\om_{n-1}}{2n} - \frac{n-1}{\beta_n}\sum_{k=1}^{n -1} \frac1k +O(R^{-\frac n{n-1}})\rt),
\end{align}
and 
\begin{align*}
\int_{\Om\setminus B_{R\ep}(p)} |\na w_\ep|^n dx = \frac1{c^{\frac n{n-1}}}\int_{\Om\setminus B_{R\ep}(p)} |\na G|^ndx.
\end{align*}
Using integration by parts, \eqref{eq:Green} and the form of $G$ in \eqref{eq:Gform}, we get
\begin{equation}\label{eq:aqwx}
\int_{\Om\setminus B_{R\ep}(p)} |\na G|^n dx = \alpha \|G\|_n^n -\frac n{\beta_n}\ln (R\ep) +  A_p + O(R\ep \ln(R\ep)).
\end{equation}
\eqref{eq:aqw} and \eqref{eq:aqwx} prove \eqref{eq:ab1}.

By \eqref{eq:A}, we have
\begin{align*}
c^{\frac1{n-1}}w_\ep(x) &=\frac{n-1}{\beta_n} \lt(\ln \lt(1+ \lt(\frac{\om_{n-1}}{2n}\rt)^{\frac1{n-1}} R^{\frac n{n-1}}\rt) -\ln \lt(1+ \lt(\frac{\om_{n-1}}{2n}\rt)^{\frac1{n-1}} \frac{r^{\frac n{n-1}}}{\ep^{\frac n{n-1}}}\rt)\rt)\\
&\qquad -\frac n{\beta_n} \ln (R\ep) + A_p
\end{align*}
if $r =|x-p| < R\ep$. Hence 
\begin{equation}\label{eq:x123}
c^{\frac1{n-1}}|w_\ep(x)| \leq \frac{n-1}{\beta_n} \ln \lt(1+ \lt(\frac{\om_{n-1}}{2n}\rt)^{\frac1{n-1}} R^{\frac n{n-1}}\rt) - \frac n{\beta_n} \ln (R\ep) + |A_p|,
\end{equation}
if $r < R\ep$, and
\begin{equation}\label{eq:inte1}
c^{\frac1{n-1}} \int_{\Om \cap B_{R\ep}(p)} w_\ep dx = O((R\ep)^n (-\ln(R\ep))).
\end{equation}
Since $\int_\Om G dx =0$, we then have
\begin{align*}
c^{\frac1{n-1}} \int_{\Om \setminus B_{R\ep}(p)} w_\ep dx &= \int_{\Om \setminus B_{R\ep}(p)} G dx - \int_{\Om \cap (B_{2R\ep}(p) \setminus B_{R\ep}(p))} \eta \beta dx\\
&= -\int_{\Om \cap B_{R\ep}(p)} G dx - \int_{\Om \cap (B_{2R\ep}(p) \setminus B_{R\ep}(p))} \eta \beta dx.
\end{align*}
This equality together with the forms of $G$ and $\beta$ in \eqref{eq:Gform} implies
\begin{equation}\label{eq:inte2}
c^{\frac1{n-1}} \int_{\Om \setminus B_{R\ep}(p)} w_\ep dx =O((R\ep)^n (-\ln(R\ep))).
\end{equation}
\eqref{eq:meanwep} follows from \eqref{eq:inte1} and \eqref{eq:inte2}.

Finally, we have
\[
c^{\frac n{n-1}} \int_{\Om} |w_\ep|^n dx = \|G\|_n^n -\int_{\Om\cap B_{R\ep}(p)} |G|^n dx + \int_{\Om \cap B_{R\ep}}|c^{\frac1{n-1}} w_\ep|^n dx.
\]
This equality combining with \eqref{eq:Gform} and \eqref{eq:x123} implies \eqref{eq:Lnnorm}.
\end{proof}

From \eqref{eq:meanwep} and \eqref{eq:Lnnorm} we get
\begin{equation}\label{eq:bx1}
\|\phi_\ep\|_n^n=\frac{1}{c^{\frac n{n-1}}}\lt(\|G\|_n^n + O(R\ep \ln(R\ep))\rt).
\end{equation}
Therefore, we obtain by \eqref{eq:ab1} and \eqref{eq:bx1} that
\begin{align*}
\|\phi_\ep\|_{1,\alpha}^n & = \|\na w_\ep\|_n^n - \al \|\phi_\ep\|_n^n\\
&=\frac1{c^{\frac n{n-1}}} \Bigg(-\frac n{\beta_n}\ln \ep +  A_p  + \frac{1}{\beta_n} \ln \frac{\om_{n-1}}{2n}- \frac{n-1}{\beta_n}\sum_{k=1}^{n -1} \frac1k +O\lt(\frac1{(-\ln \ep)^{\frac n{n-1}}}\rt)\Bigg)
\end{align*}
here we use $R = -\ln \ep$. Hence, we can choose $c$ such that $\|\phi_\ep\|_{1,\al} =1$ for $\ep$ sufficient small, and 
\begin{equation}\label{eq:cvalue}
c^{\frac n{n-1}} = -\frac n{\beta_n}\ln \ep +  A_p  + \frac{1}{\beta_n} \ln \frac{\om_{n-1}}{2n}- \frac{n-1}{\beta_n}\sum_{k=1}^{n -1} \frac1k +O\lt(\frac1{(-\ln \ep)^{\frac n{n-1}}}\rt).
\end{equation}
We next compute $\int_\Om e^{\beta_n |\phi_\ep|^{\frac n{n-1}}} dx$. On $\Om\setminus B_{R\ep}(p)$ we have
\begin{align*}
\int_{\Om\setminus B_{R\ep}(p)} e^{\beta_n |\phi_\ep|^{\frac n{n-1}}} dx &\geq \int_{\Om\setminus B_{R\ep}(p)}\lt(1 + \frac{\beta_n^{n-1}}{(n-1)!} |\phi_\ep|^{n}\rt) dx\\
&= |\Om\setminus B_{R\ep}| + \frac{\beta_n^{n-1}}{(n-1)!} \frac{\|G\|_n^n}{c^{\frac n{n-1}}}+ O\lt(\frac1{(-\ln \ep)^{\frac n{n-1}}}\rt)\\
&=|\Om| + \frac{\beta_n^{n-1}}{(n-1)!} \frac{\|G\|_n^n}{c^{\frac n{n-1}}}+ O\lt(\frac1{(-\ln \ep)^{\frac n{n-1}}}\rt).
\end{align*}
On $\Om \cap B_{R\ep}(p)$, using the simple inequality $(1+t)^a \geq 1 +at$ for any $t> -1$ and $a>1$, and using \eqref{eq:meanwep} and \eqref{eq:cvalue} we have
\[
|\phi_\ep|^{\frac n{n-1}} \geq c^{\frac n{n-1}} +\frac{n}{n-1}\lt(-\frac{n-1}{\beta_n} \ln \lt(1+ \lt(\frac{\om_{n-1}}{2n}\rt)^{\frac1{n-1}} \frac{r^{\frac n{n-1}}}{\ep^{\frac n{n-1}}}\rt) + A\rt) + O\lt(\frac1{(-\ln \ep)^{\frac n{n-1}}}\rt).
\]
Hence
\begin{align*}
|\phi_\ep|^{\frac n{n-1}} &\geq -\frac 1{n-1}c^{\frac n{n-1}} +\frac n{n-1}(A+ c^{\frac n{n-1}})\\
&\hspace{2cm}-\frac n{\beta_n} \ln \lt(1+ \lt(\frac{\om_{n-1}}{2n}\rt)^{\frac1{n-1}} \frac{r^{\frac n{n-1}}}{\ep^{\frac n{n-1}}}\rt) + O\lt(\frac1{(-\ln \ep)^{\frac n{n-1}}}\rt)\\
&=-\frac n{\beta_n}\ln \ep +  A_p +\frac1{\beta_n}\sum_{k=1}^{n-1}\frac1k + \frac1{\beta_n}\ln \frac{\om_{n-1}}{2n} \\
&\hspace{2cm}-\frac n{\beta_n} \ln \lt(1+ \lt(\frac{\om_{n-1}}{2n}\rt)^{\frac1{n-1}} \frac{r^{\frac n{n-1}}}{\ep^{\frac n{n-1}}}\rt) + O\lt(\frac1{(-\ln \ep)^{\frac n{n-1}}}\rt).
\end{align*}
Integrating on $\Om\cap B_{R\ep}(p)$, we get
\begin{align*}
&\int_{\Om\cap B_{R\ep}(p)} e^{\beta_n|\phi_\ep|^{\frac n{n-1}}} dx\\
&\qquad\geq \frac{\om_{n-1}}{2n} e^{\beta_n A_p+\sum_{k=1}^{n-1}\frac1k}\ep^{-n} \int_{\Om\cap B_{R\ep}(p)} \lt(1+ \lt(\frac{\om_{n-1}}{2n}\rt)^{\frac1{n-1}} \frac{r^{\frac n{n-1}}}{\ep^{\frac n{n-1}}}\rt)^{-n} dx+ O\lt(\frac1{(-\ln \ep)^{\frac n{n-1}}}\rt)\\
&\qquad= \frac{\om_{n-1}}{2n} e^{\beta_n A_p+\sum_{k=1}^{n-1}\frac1k}\int_{B_R(0) \cap \frac{\Om-p}\ep} \lt(1+ \lt(\frac{\om_{n-1}}{2n}\rt)^{\frac1{n-1}} r^{\frac n{n-1}}\rt)^{-n} dx + O\lt(\frac1{(-\ln \ep)^{\frac n{n-1}}}\rt)\\
&\qquad= \frac{\om_{n-1}}{2n} e^{\beta_n A_p+\sum_{k=1}^{n-1}\frac1k} + O\lt(\frac1{(-\ln \ep)^{\frac n{n-1}}}\rt).
\end{align*}
Combining these estimates together and using \eqref{eq:cvalue}, we get
\begin{align*}
\int_\Om e^{\beta_n|\phi_\ep|^{\frac n{n-1}}} dx& \geq |\Om| + \frac{\om_{n-1}}{2n} e^{\beta_n A_p+\sum_{k=1}^{n-1}\frac1k} + \frac{\beta_n^{n-1}}{(n-1)!} \frac{\|G\|_n^n}{c^{\frac n{n-1}}}+ O\lt(\frac1{(-\ln \ep)^{\frac n{n-1}}}\rt)\\
&= |\Om| + \frac{\om_{n-1}}{2n} e^{\beta_n A_p+\sum_{k=1}^{n-1}\frac1k} + \frac{\beta_n^{n-1}}{(n-1)!c^{\frac n{n-1}}}\lt(\|G\|_n^n + O\lt(\frac1{(-\ln \ep)^{\frac1{n-1}}}\rt)\rt),
\end{align*}
Choosing $\ep >0$ sufficiently small, we see that \eqref{eq:suff} holds. This finishes the proof of Theorem \ref{Main2}.
\end{proof}

\section*{Acknowledgments}
This work was supported by CIMI's postdoctoral research fellowship.

\end{document}